\numberwithin{equation}{section}
\DeclareMathOperator{\Inn}{Inn}
\DeclareMathOperator{\Aut}{Aut}
\DeclareMathOperator{\diag}{diag}
\DeclareMathOperator{\id}{id}
\DeclareMathOperator{\ch}{char}
\DeclareMathOperator{\GL }{GL }
\DeclareMathOperator{\SL}{SL}
\DeclareMathOperator{\Sp}{Sp}
\DeclareMathOperator{\SO}{SO}
\DeclareMathOperator{\PGL}{PGL}
\newcommand{\I}{\mathcal{I}}
\title{Isomorphy classes of $k$-involutions of $G_2$}
\author{John Hutchens  \\ \emph{jdhutche@ncsu.edu} \\ \\ \emph{Department of Mathematics} \\ \emph{North Carolina State University} \\ \emph{Raleigh, NC  27695} }
\begin{document}

\maketitle

\begin{abstract}
Isomorphy classes of $k$-involutions have been studied for their correspondence with symmetric $k$-varieties, also called generalized symmetric spaces.  A symmetric $k$-variety of a $k$-group $G$ is defined as $G_k/H_k$ where $\theta:G \to G$ is an automorphism of order $2$ that is defined over $k$ and $G_k$ and $H_k$ are the $k$-rational points  of $G$ and $H=G^{\theta}$, the fixed point group of $\theta$, respectively.  This is a continuation of papers written by A.G. Helminck and collaborators \cite{He00}, \cite{DHW06}, \cite{DHW-}, \cite{HW02} expanding on his combinatorial classification over certain fields.  Results have been achieved for groups of type $A$, $B$ and $D$.  Here we begin a series of papers doing the same for algebraic groups of exceptional type.
\end{abstract}

\section{Introduction}

The problem of identifying all isomorphy classes of symmetric $k$-varieties is described by Helminck in \cite{He94}.  There he notes that isomorphy classes of symmetric $k$-varieties of algebraic groups and isomorphy classes of their $k$-involutions are in bijection.  In the following we provide a classification of isomorphy classes of $k$-involutions for the split type of $G_2$ over certain fields. \

The main result of this paper is an explicit classification of $k$-involutions of the split form of $G_2$ where $k=\mathbb{R},\mathbb{C},\mathbb{Q}, \mathbb{Q}_p,$ and $\mathbb{F}_q$, where $q>2$.  We do this by finding explicit elements of $\Aut(G_2)$, where $G_2=\Aut(C)$ and $C$ is always the split octonion algebra over a given field of characteristic not $2$. \

The results from this paper rely most heavily on the works of Jacobson \cite{Ja58} on composition algebras, Lam's presentation of quadratic forms \cite{La05}, and Helminck et. al. on symmetric spaces and $k$-involutions of algebraic groups.

A \emph{$k$-involution} is an automorphism of order exactly 2, that is defined over the field $k$.  The isomorphy classes of these $k$-involutions are in bijection with the quotient spaces $G_k/H_k$, where $G_k$ and $H_k$ are the $k$-rational points of the groups $G$ and $H=G^{\theta}=\{g\in G \ | \ \theta(g)=g \}$ respectively, these quotient spaces will be called \emph{symmetric $k$-varieties} or \emph{generalized symmetric spaces}. \

The group of characters and root space associated with a torus $T$ are denoted by $X^*(T)$ and $\Phi(T)$ respectively.  We will also denote by 
	\[ A_{\theta}^- = \{ a \in A \ | \ \theta(a) = a^{-1} \}^{\circ}, \]
and by
	\[ I_k(A_{\theta}^-) = \{ a \in A_{\theta}^- \ | \ \theta \circ \Inn(a) \text{ is a $k$-involution} \}. \]

In the following we introduce a  characterization of $k$-involutions of an algebraic group of a specific type given by Helminck.  The full classification can be completed with the classification of the following three types of invariants, \cite{He00},

\begin{enumerate}[(1)]
\item classification of admissible involutions of $(X^*(T),X^*(A), \Phi(T), \Phi(A))$, where $T$ is a maximal torus in $G$, A is a maximal $k$-split torus contained in $T$
\item classification of the $G_k$-isomorphy classes of $k$-involutions of the $k$-anisotropic kernel of $G$
\item classification of the $G_k$-isomorphy classes of $k$-inner elements $a\in I_k(A_{\theta}^-)$.
\end{enumerate}

For this paper we do not consider $(2)$ since our algebraic groups will be $k$-split.  We mostly focus on (3) and refer to (1) when appropriate, though Helminck has provided us with a full classification of these \cite{He00}.  \

The main result is an explicit description of the $k$-inner elements up to isomorphy, which completes the classification of $k$-involutions for the split group of type $G_2$.  Each admissible involution of $(X^*(T), X^*(A), \Phi(T), \Phi(A) )$ can be lifted to a $k$-involution $\theta$ of the algebraic group.  This lifting is not unique.  The involutions that induce the same involution as $\theta$ when restricted to \\ $(X^*(T), X^*(A), \Phi(T), \Phi(A) )$ are of the form $\theta \circ \Inn(a)$ where $a \in A_{\theta}^-$.  This set of elements such that $\theta \circ \Inn(a)$ is a $k$-involution form the set of \emph{$k$-inner elements} associated with the involution $\theta$ and are denoted $I_k(A_{\theta}^-)$.   \

Yokota wrote about $k$-involutions, $\theta$, and fixed point groups, $G^{\theta}$, for algebraic groups of type $G_2$ for $k=\mathbb{R}, \mathbb{C}$. In fact the elements of $G_2=\Aut(C)$ we call $\mathcal{I}_{t_{(\pm 1, \pm 1)}}$ correspond to the $\gamma$ maps in \cite{Yo90}, which are a conjugation with respect to complexification at different levels within the octonion algebra taken over $\mathbb{R}$ or $\mathbb{C}$  \cite{Yo90} .  We found these using different methods than in \cite{Yo90}, and show the correspondence.  \

In section 3.1 we find $k$-involutions that come from conjugation by elements in a maximal $k$-split torus and using a result of Jacobson show they are isomorphic in Proposition \ref{tconj}.  This will take care of all cases where $\Aut(C)= G_2$ is taken over a field whose structure permits a split octonion algebra and only split quaternion algebras, namely $k=\mathbb{C}$ and $\mathbb{F}_p$ when $p>2$.  \

Over other fields there is the possibility of division quaternion algebras, and this fact using Proposition \ref{jake} gives us another isomorphy class of $k$-involutions when we take $k=\mathbb{R}, \mathbb{Q}$ and $\mathbb{Q}_p$.  In section 3.2 we find the $k$-involution $\theta$ for which our maximal $k$-split torus is a maximal $(\theta,k)$-split torus and in Lemma \ref{nonsplitq} we find a representative for the only other possible isomorphy class of $k$-involutions over these fields. \

In section 3.3 we summarize the main results of the paper in Theorem \ref{maintheorem} and give the full classification of $k$-involutions when $k=\mathbb{C}, \mathbb{R}, \mathbb{Q}_p$ and $\mathbb{F}_q$ when $p \geq 2$ and $q>2$.  We finish up by giving descriptions of the fixed point groups of isomorphy classes of $k$-involutions.

\subsection{Preliminaries and recollections}

Most of our notation is borrowed from \cite{Sp98} for algebraic groups, \cite{He00} for $k$-involutions and generalized symmetric spaces, \cite{La05} for quadratic forms and \cite{SV00} for octonion and quaternion algebras. \

The letter $G$ is reserved for an arbitrary reductive algebraic group unless it is $G_2$, which is specifically the automorphism group of an octonion algebra.  When we refer to a maximal torus we use $T$ and any subtorus is denoted by another capital letter, usually $A$.  Lowercase Greek letters are field elements and other lowercase letters denote vectors.  We use $Z(G)$ to denote the center of $G$, $Z_G(T)$ to denote the centralizer of $T$ in $G$, and $N_G(T)$ to denote the normalizer of $T$ in $G$.  \

By $\Aut(G)$ we mean the automorphism group of $G$, and by $\Aut(C)$ we mean the linear automorphisms of the composition algebra $C$.  The group of inner automorphisms are denoted $\Inn(G)$ and the elements of $\Inn(G)$ are denoted by $\mathcal{I}_g$ where $g\in G$ and $\mathcal{I}_g(x) = gxg^{-1}$. \

We define a \emph{$\theta$-split torus}, $A$, of an involution, $\theta$, as a torus $A  \subset G$ such that $\theta(a)=a^{-1}$ for all $a\in A$.  We call a torus \emph{$(\theta, k)$-split} if it is both $\theta$-split and $k$-split.  \

Let $A$ be a $\theta$-stable maximal $k$-split torus such that $A_{\theta}^-$ is a maximal $(\theta,k)$-split torus.  By \cite{HW02} there exists a maximal $k$-torus $T \supset A$ such that $T_{\theta}^- \supset A_{\theta}^-$ is a maximal $\theta$-split torus.  The involution $\theta$ induces an involution $\theta \in \Aut(X^*(T), X^*(A), \Phi(T), \Phi(A))$.  It was shown by Helminck \cite{He00} that such an involution is unique up to isomorphy.  For $T\supset A$ a maximal $k$-torus, an 
\[ \theta \in \Aut(X^*(T), X^*(A), \Phi(T), \Phi(A)) \] is \emph{admissible} if there exists an involution $\tilde{\theta} \in \Aut(G,T,A)$ such that $\tilde{\theta}|_T = \theta$, $A_{\theta}^-$ is a maximal $(\theta, k)$-split torus, and $T_{\theta}^-$ is a maximal $\tilde{\theta}$-split torus of $G$.  \

This will give us the set of $k$-involutions on $G$ that extend from involutions on the group of characters, $X^*(T)$.\

As for the $k$-inner elements they are defined as follows;  if $\theta$ is a $k$-involution and $A_{\theta}^-$ is a maximal $\theta$-split torus then the elements of the set,
\[ I_k(A_{\theta}^-) = \left\{ a \in A_{\theta}^- \ \big| \ \left(\theta \circ \mathcal{I}_a \right)^2 = \id, \ \left(\theta \circ \mathcal{I}_a\right)(G_k)=G_k  \right\}, \]
are called \emph{$k$-inner elements} of $\theta$.  Some compositions $\theta \circ \mathcal{I}_a$ will not be isomorphic in the group $\Aut(G)$ for different $a\in I_k(A_{\theta}^-)$, though they will project down to the same involution of the group of characters of a maximal torus fixing the characters associated with a maximal $k$-split subtorus for all $a\in I_k(A_{\theta}^-)$.  \

\subsection{Split octonion algebra}

Throughout this paper we use $N$ to be a quadratic form of a composition algebra and $\langle \ , \ \rangle$ to be the bilinear form associated with $N$.  The capital letters $C$ and $D$ denote composition algebras and composition subalgebras respectively.  The composition algebras we will refer to always have an identity, $e$.  There is an anti-automorphism on a composition algebra that resembles complex conjugation denoted by, $\bar{ \ }$, which will have specific but analogous definitions depending on the dimension of the algebra.  \

If $\{ e, a, b, ab\}$ is a basis for $D$, a quaternion algebra such that $a^2 = \alpha$ and $b^2 = \beta$, then we denote its quadratic from by, $\left( \frac{\alpha,\beta}{k} \right) \cong \langle 1, -\alpha, -\beta, \alpha\beta \rangle$.  We will often refer to a quaternion algebra over a field $k$ by the $2$-Pfister form notation of its quadratic form.  Since a composition algebra is completely determined by its quadratic form and its center, $ke$, there is no risk of ambiguity.  By $k$ we are referring to an arbitrary field and by using the blackboard bold $\mathbb{F}$ we refer to a specific field.  We consider only fields that do not have characteristic $2$.  \

We always consider the octonion algebra as a doubling of the split quaternions thought of as $M_2(k)$, the $2 \times 2$ matrices over our given field.  The octonion algebras we consider are an ordered pair of these with an extended multiplication that will be described in the next section.

For the following results we refer you to \cite{SV00} for the proofs.  In these upcoming results $V$ is a vector space over a field, $k$, $\ch(k)\neq 2$, equipped with a quadratic form $N:V \to k$  and the associated bilinear form $\langle \ , \ \rangle:V\times V \to k$, such that
\begin{enumerate}
\item $N(\alpha v) = \alpha^2 N(v)$,
\item $\langle v,w \rangle = N(v+w) - N(v) - N(w)$,
\end{enumerate}
where, $v,w \in V$ and $\alpha \in k$.  

A \emph{composition algebra}, $C$, will be a vector space with identity, $e$, and $N$ and $\langle \ , \ \rangle$ as above such that $N(xy)=N(x)N(y)$.  Note that $ke$ is a composition algebra in a trivial way.

\newtheorem{doubling}[subsubsection]{Proposition, \cite{SV00}}
\begin{doubling}
When $C$ is a composition algebra with $D\subset C$ a finite dimensional subalgebra of $C$ with $C\neq D$, then we can choose $a \in D^{\perp}$ with $N(a) \neq 0$, then $D \oplus Da$ is a composition algebra.  The product, quadratic form, and complex conjugation are given by
\begin{enumerate}
\item $(x+ya)(u+va) = (xu + \alpha \bar{v}y) + (vx + y\bar{u})a$, \text{ for } $x,y,u,v \in D$, $\alpha \in k^*$
\item $N(x+ya) = N(x) - \alpha N(y)$
\item $\overline{x+ya} = \bar{x} - ya$.
\end{enumerate}
The dimension of $D\oplus Da$ is twice the dimension of $D$ and $\alpha = -N(a)$.
\end{doubling}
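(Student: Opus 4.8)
The plan is to realize $D\oplus Da$ by the Cayley--Dickson doubling procedure and to verify directly that the resulting structure is a composition algebra. First I would secure the existence of the doubling element $a$. Since $D$ is a composition subalgebra, the restriction of $N$ to $D$ is nondegenerate, so $C = D \perp D^{\perp}$, and because $C \neq D$ the orthogonal complement $D^{\perp}$ is a nonzero nondegenerate subspace; hence it contains an anisotropic vector $a$ with $N(a)\neq 0$. Writing $\alpha = -N(a)$, I would record the two basic facts that drive the whole computation: from $a\in D^{\perp}$ and $e\in D$ we get $\langle a,e\rangle = 0$, so the conjugate $\bar a = \langle a,e\rangle e - a = -a$; and then from $a\bar a = N(a)e$ together with $\bar a = -a$ we obtain $a^2 = -N(a)e = \alpha e$, with $\alpha \in k^{*}$ since $N(a)\neq 0$.

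Next I would establish that the sum $D\oplus Da$ is direct and has the asserted dimension. Using the adjoint relation $\langle xy,z\rangle = \langle y,\bar x z\rangle$, for $y,z\in D$ one has $\langle ya,z\rangle = \langle a,\bar y z\rangle = 0$ because $\bar y z\in D$ and $a\perp D$; thus $Da \perp D$. Since $N(ya) = N(y)N(a)$ and $N(a)\neq 0$, right multiplication by $a$ is injective on $D$, so the map $y\mapsto ya$ carries $D$ isomorphically onto $Da$, and $Da\cap D = 0$ gives $\dim(D\oplus Da) = 2\dim D$. I would also note that the conjugation formula $\overline{x+ya} = \bar x - ya$ is forced once $\bar a = -a$ and $\overline{xy}=\bar y\bar x$ are in hand.

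The computational core is the derivation of the four product rules encoded in the multiplication formula, namely $x\cdot u = xu$, $x\cdot(va) = (vx)a$, $(ya)\cdot u = (y\bar u)a$, and $(ya)(va) = \alpha\,\bar v y$, for $x,y,u,v\in D$. Each is obtained from the standard linearized composition identities together with $\bar a = -a$, $a^2 = \alpha e$, and $a\perp D$: one first shows, by pairing against arbitrary elements of $D$ and of $Da$ and using orthogonality, that each product lies in the correct summand, and then one identifies its component by a further inner-product computation. I expect this to be the main obstacle, since it requires careful bookkeeping with the bilinear adjoint relations and the alternativity identities $x(\bar x z) = N(x)z$ rather than any single clever idea; the failure of associativity in the non-quaternionic case makes it essential to track every bracketing throughout.

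Finally, I would verify the norm formula and multiplicativity. The orthogonal decomposition $D\perp Da$ immediately yields $N(x+ya) = N(x) + N(ya) = N(x) - \alpha N(y)$, and since $N$ is nondegenerate on $D$ and $\alpha\neq 0$ this form is nondegenerate on $D\oplus Da$. It then remains to check the composition law $N\big((x+ya)(u+va)\big) = N(x+ya)\,N(u+va)$ by substituting the four product rules and expanding both sides using $x\bar x = N(x)e$ and the multiplicativity of $N$ already available on $D$; the cross terms cancel by the adjoint relations. This establishes that $D\oplus Da$ is a composition algebra with the stated product, norm, and conjugation, completing the proof.
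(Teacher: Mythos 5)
The paper itself offers no proof of this proposition: it is quoted from Springer--Veldkamp, and the text explicitly defers to \cite{SV00} for the proofs of all results in that subsection. Your outline is essentially the standard argument from that reference (produce an anisotropic $a \in D^{\perp}$, derive $\bar a = -a$ and $a^2 = \alpha e$, establish $D \perp Da$ and the dimension count, then extract the product, norm, and conjugation formulas from the linearized composition identities), and it is correct in substance, so there is nothing in the paper to compare it against. Two refinements are worth making. First, your injectivity step is stated too weakly: multiplicativity $N(ya)=N(y)N(a)$ with $N(a)\neq 0$ only shows that the kernel of right multiplication by $a$ lies in the null cone of $N|_D$, which is nonzero precisely in the split case relevant to this paper; to conclude injectivity you need the polarized identity $\langle ya, za\rangle = N(a)\langle y,z\rangle$ together with nondegeneracy of $N|_D$, or alternativity in the form $(ya)\bar a = N(a)\,y$ --- a one-line fix, but it uses exactly the linearization tools you invoke elsewhere rather than bare multiplicativity. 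Second, your closing verification of the composition law $N\bigl((x+ya)(u+va)\bigr) = N(x+ya)N(u+va)$ is redundant under your (correct) reading of the statement, in which $D \oplus Da$ sits inside $C$: once the four product rules show that $D\oplus Da$ is closed under the multiplication of $C$, the norm and its multiplicativity are inherited from $C$, and only nondegeneracy of $N$ on $D \oplus Da$ (which you already get from the orthogonal decomposition) remains to be observed.
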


We often use this decomposition above in the results that follow, and a theorem of Adolf Hurwitz gives us the possible dimensions of such algebras.

\newtheorem{hurwitz}[subsubsection]{Theorem, (A. Hurwitz), \cite{SV00}}
\begin{hurwitz}
\label{hurwitz}
Every composition algebra can be obtained from iterations of the doubling process starting from $ke$.  The possible dimensions of a composition algebra are $1,2,4$, or $8$.  A composition algebra of dimension $1$ or $2$ is commutative and associative, a composition algebra of dimension $4$ is associative and not commutative, a composition algebra of dimension $8$ is neither commutative nor associative.
\end{hurwitz}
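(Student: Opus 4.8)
The plan is to realize $C$ as the top of an ascending chain of composition subalgebras produced by the doubling Proposition, and to track how commutativity and associativity degrade at each step. First I would recall from \cite{SV00} the standard background facts that make doubling available: the norm $N$ is nondegenerate on $C$, conjugation is given by $\bar{x} = \langle x,e\rangle e - x$ with $x\bar{x} = N(x)e$ and $N(\bar{x}) = N(x)$, and in characteristic $\neq 2$ the fixed space of conjugation is exactly $ke$. Starting from $D_0 = ke$ of dimension $1$, I apply the doubling Proposition repeatedly: as long as $D_i \neq C$ I may choose $a_{i+1} \in D_i^{\perp}$ with $N(a_{i+1}) \neq 0$ and set $D_{i+1} = D_i \oplus D_i a_{i+1}$, a composition subalgebra of $C$ with $\dim D_{i+1} = 2 \dim D_i$. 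This yields subalgebras of dimension $1, 2, 4, 8, \dots$, and since $C$ is finite dimensional the chain stabilizes at some $D_n = C$, which already proves that every composition algebra arises from iterated doublings of $ke$.

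Next I would isolate two structural lemmas describing a doubling $D \oplus Da$ in terms of $D$, both obtained by direct expansion from the product rule of the doubling Proposition. The first is that $D \oplus Da$ is commutative if and only if $D = ke$: comparing $(x+ya)(u+va)$ with $(u+va)(x+ya)$ forces both $D$ commutative and conjugation trivial on $D$, and triviality of conjugation characterizes $ke$ by the fixed-space remark above. The second is that $D \oplus Da$ is associative if and only if $D$ is both commutative and associative, again a direct expansion of the two triple products. Running the chain through these lemmas gives the structural half of the statement: $D_0 = ke$ and its first doubling (dimensions $1$ and $2$) are commutative and associative; the doubling to dimension $4$ is associative, being the doubling of a commutative associative algebra, but is no longer commutative since its predecessor properly contains $ke$; and the doubling to dimension $8$ fails associativity, being the doubling of a noncommutative algebra, and is a fortiori noncommutative.

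The main obstacle, and the source of the dimension bound, is ruling out dimension $16$. Here I would prove the key computation that the doubling product on $D \oplus Da$ satisfies the composition identity $N(xy) = N(x)N(y)$ if and only if $D$ is associative. Expanding $N\bigl((x+ya)(u+va)\bigr)$ via the norm formula $N(x+ya) = N(x) - \alpha N(y)$ and comparing with $N(x+ya)\,N(u+va)$, the difference of the two sides collapses, after using multiplicativity of $N$ on $D$ together with the linearized norm identities, to a term built from the associator $[x,y,z] = (xy)z - x(yz)$ on $D$; this term vanishes for all $x,y,u,v \in D$ exactly when $D$ is associative. Since the dimension-$8$ member $D_3$ of the chain is not associative, its doubling cannot carry a multiplicative norm. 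But if $C$ had dimension greater than $8$, then $D_3 \neq C$, so the doubling Proposition would let us form the composition subalgebra $D_3 \oplus D_3 a_4 \subseteq C$, whose norm is multiplicative as the restriction of $N$ from $C$ — contradicting the key computation. Hence $D_3 = C$, so $\dim C \leq 8$ and the possible dimensions are precisely $1, 2, 4, 8$, completing the proof.
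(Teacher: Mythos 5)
The paper never proves this theorem: it is quoted from the reference \cite{SV00} (``For the following results we refer you to \cite{SV00} for the proofs''), so there is no in-paper argument to compare against. Your proof is correct and is essentially the standard argument from that cited source --- the iterated doubling chain starting at $ke$, the lemmas that a doubling $D \oplus Da$ is commutative exactly when $D=ke$ and associative exactly when $D$ is commutative and associative, and the key fact that the doubled norm is multiplicative only when $D$ is associative, which terminates the chain at dimension $8$.
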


\newtheorem{doublesplit}[subsubsection]{Corollary, \cite{SV00}}
\begin{doublesplit}
Any doubling of a split composition algebra is again a split composition algebra.
\end{doublesplit}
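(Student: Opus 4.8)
The plan is to reduce the statement to a fact about norm forms. A composition algebra is split precisely when its norm form $N$ is isotropic, i.e.\ when there is a nonzero vector on which $N$ vanishes (equivalently, when the algebra has nonzero zero divisors); this is the characterization for composition algebras recorded in \cite{SV00}. So it suffices to show that if the norm form of the algebra $D$ being doubled is isotropic, then the norm form of the doubling $D \oplus Da$ is again isotropic.

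First I would write down the norm form of the doubling supplied by the preceding Proposition, namely $N(x + ya) = N(x) - \alpha N(y)$ for $x,y \in D$, with $\alpha = -N(a) \in k^*$. Viewed as a quadratic form in the coordinates of $x$ and $y$, this is the orthogonal sum $N|_D \perp \langle -\alpha \rangle N|_D$; in particular it contains the norm form $N|_D$ of $D$ as the subform on the subspace $D \oplus 0$.

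Next, since $D$ is split, I would choose a nonzero $x_0 \in D$ with $N(x_0) = 0$. Then the element $x_0 = x_0 + 0 \cdot a$ is a nonzero element of $D \oplus Da$ whose norm is $N(x_0) - \alpha N(0) = 0$. Hence the norm form of $D \oplus Da$ is isotropic, and therefore $D \oplus Da$ is split, which is exactly the assertion of the Corollary.

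There is essentially no obstacle here beyond recording the right definition: the whole content is that splitness is detected by isotropy of the norm form, and isotropy is automatically inherited by any quadratic form that contains an isotropic subform. If one prefers to phrase splitness in terms of hyperbolicity rather than isotropy, the one extra ingredient needed is that the norm forms of composition algebras are Pfister forms, for which isotropy and hyperbolicity coincide; the doubled form $N|_D \perp \langle -\alpha \rangle N|_D = N|_D \otimes \langle 1, -\alpha \rangle$ is then hyperbolic as soon as $N|_D$ is. Either way the argument is immediate once the norm-form description of the doubling is in hand.
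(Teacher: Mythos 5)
Your proof is correct. Note that the paper itself contains no proof of this corollary --- it is one of the results quoted from \cite{SV00} with the remark ``we refer you to \cite{SV00} for the proofs'' --- so there is no internal argument to compare against; yours is the standard one. The two ingredients you use are both sound: the doubled norm $N(x+ya)=N(x)-\alpha N(y)$ restricts on the subspace $D\oplus 0$ to the norm of $D$, so any isotropic vector of $N|_D$ is already an isotropic vector of the doubled form, and splitness of a composition algebra is equivalent to isotropy of its norm. One simplification worth recording: with the paper's working definition of \emph{split} (existence of zero divisors), you can bypass the norm--isotropy characterization entirely. By the multiplication rule in the preceding Proposition, elements of the form $x+0\cdot a$ multiply exactly as in $D$, i.e.\ $D$ embeds in $D\oplus Da$ as a subalgebra; hence any pair of nonzero elements of $D$ with product zero is already a pair of zero divisors in $D\oplus Da$, and the doubling is split. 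That version uses nothing beyond the doubling formula itself, whereas your route additionally invokes the equivalence of ``has zero divisors'' with ``isotropic norm,'' which, while standard and correctly attributed to \cite{SV00}, is a genuinely extra fact. Either way the argument is complete.
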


There are $2$ general types of composition algebras.  If there are no zero divisors we call the composition algebra a \emph{division algebra}, and otherwise we call it a \emph{split algebra}.  It follows from the definition that a composition algebra is determined completely by its norm, and we have the following theorem.  

\newtheorem{splitcomp}[subsubsection]{Theorem, \cite{SV00}}
\begin{splitcomp}
\label{splitcomp}
In dimensions $2,4,$ and $8$ there is exactly one split composition algebra, over a given field $k$, up to isomorphism.
\end{splitcomp}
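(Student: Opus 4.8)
The plan is to reduce the statement to a question about quadratic forms. As observed just above, a composition algebra is determined completely by its norm, so two composition algebras over $k$ are isomorphic precisely when their norm forms are isometric. It therefore suffices to prove two things: that a split composition algebra exists in each of the dimensions $2,4,8$, and that any two split composition algebras of the same dimension have isometric norm forms.

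Existence I would dispatch first. The hyperbolic plane $\langle 1, -1 \rangle$ is the norm of the split two-dimensional algebra $k \oplus k$ (obtained by doubling $ke$ with parameter a square), and by the preceding Corollary \ref{doublesplit} any doubling of a split composition algebra is again split. Doubling twice then produces split algebras of dimensions $4$ and $8$, namely the split quaternions $M_2(k)$ and the split octonions, so a split composition algebra exists in each of the three dimensions.

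For uniqueness, the key is the structure of the norm form. By Theorem \ref{hurwitz} every composition algebra $C$ of dimension $2^m$ (here $m \in \{1,2,3\}$) is obtained by $m$ successive doublings of $ke$, and part (2) of the doubling Proposition shows that each doubling tensors the current norm with a binary form $\langle 1, -\alpha_i \rangle$. Hence
\[ N_C \cong \langle 1, -\alpha_1 \rangle \otimes \cdots \otimes \langle 1, -\alpha_m \rangle, \]
an $m$-fold Pfister form. Now $C$ being split means it has zero divisors; since $N(xy) = N(x)N(y)$, a nontrivial relation $xy = 0$ forces $N$ to represent $0$ nontrivially, so the norm of a split algebra is isotropic. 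At this point I would invoke the central fact from Pfister theory in Lam \cite{La05}, that a Pfister form is either anisotropic or hyperbolic. An isotropic Pfister form is thus hyperbolic, and the hyperbolic form of dimension $2^m$, namely $2^{m-1}$ copies of $\langle 1, -1 \rangle$, is unique up to isometry. Consequently every split composition algebra of dimension $2^m$ carries this same norm form, and by the reduction above they are all isomorphic.

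The main obstacle I expect lies in cleanly establishing the two bridges between the algebra and its form: that the norm of an $m$-fold doubling is genuinely an $m$-fold Pfister form, so that the Pfister dichotomy applies, and that being split is equivalent to the norm being isotropic. The first follows by unwinding the doubling formula as above. For the second, one direction is the zero-divisor computation already given; conversely, an isotropic vector $x$ with $N(x)=0$ and $x \neq 0$ satisfies $x\bar{x} = N(x)e = 0$ with $\bar{x} \neq 0$ (conjugation is bijective), exhibiting a genuine pair of zero divisors. Once these equivalences are in hand, the Pfister theorem that isotropic implies hyperbolic does all the remaining work.
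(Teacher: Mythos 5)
Your proof is correct, but a comparison with ``the paper's proof'' is not quite possible here: the paper offers no argument for this statement at all --- it is one of the results quoted from \cite{SV00}, with the preceding paragraph explicitly deferring all proofs to that reference. So the real question is whether your argument stands on its own, and it does. Every ingredient is sound: the reduction to norm forms via the (nontrivial, but asserted in the paper) fact that a composition algebra is determined up to isomorphism by its norm; the identification of the norm of a $2^m$-dimensional algebra as an $m$-fold Pfister form $\langle 1,-\alpha_1\rangle \otimes \cdots \otimes \langle 1,-\alpha_m\rangle$ via Theorem \ref{hurwitz} and the doubling formula $N(x+ya)=N(x)-\alpha N(y)$; the equivalence of ``split'' with ``isotropic norm'' (your two directions, $N(xy)=N(x)N(y)$ for one and $x\bar{x}=N(x)e$ with conjugation bijective for the other, are exactly right); and the citation of the Pfister dichotomy from \cite{La05}. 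The interesting difference is with the proof in \cite{SV00} itself: Springer and Veldkamp do not invoke general Pfister theory but prove the dichotomy directly for composition algebras (their version states that the norm form of any composition algebra is either anisotropic or hyperbolic), in effect redoing the Pfister argument by hand using multiplicativity of $N$ and the presence of zero divisors. Your route buys brevity by outsourcing that step to a standard theorem in a reference the paper already uses; theirs is self-contained within the theory of composition algebras. Both versions rest on the same load-bearing bridge --- isometric norms imply isomorphic algebras --- which is genuinely a theorem (proved via Witt extension in \cite{SV00}) rather than something that ``follows from the definition,'' so it is good that you use it as a quoted fact rather than attempt to rederive it in passing.
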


We can take as a split quaternion algebra, $D$, over a field $k$ to be $M_2(k)$, the $2\times 2$ matrices over $k$.  Multiplication in $D$ will be typical matrix multiplication, our quadratic form will be given by
\[ N\left(
\begin{bmatrix}
x_{11} & x_{12} \\
x_{21} & x_{22}
\end{bmatrix}
\right)
=
\text{det} \left(
\begin{bmatrix}
x_{11} & x_{12} \\
x_{21} & x_{22}
\end{bmatrix}
\right)
= x_{11} x_{22} - x_{12}x_{21},
\] 
and \emph{bar involution} will be given by
\[
\overline{
\begin{bmatrix}
x_{11} & x_{12} \\
x_{21} & x_{22}
\end{bmatrix}
}
=
\begin{bmatrix}
x_{22} & -x_{12} \\
-x_{21} & x_{11}
\end{bmatrix}.
\]
Elements of our split octonion algebra will have the form
\[ (x,y) =\left(
\begin{bmatrix}
x_{11} & x_{12} \\
x_{21} & x_{22}
\end{bmatrix},
\begin{bmatrix}
y_{11} & y_{12} \\
y_{21} & y_{22}
\end{bmatrix}
\right).
\]
Since all split octonions over a given field are isomorphic, we can take $\alpha =1$ in our composition algebra doubling process.  The multiplication, quadratic form, and octonion conjugation are given by the following;
\begin{align}
(x,y)(u,v) &= (xu + \bar{v}y, vx + y \bar{u}), \\
N\big( (x,y) \big) &= \text{det}(x) - \text{det}(y), \\
\overline{(x,y)} &= (\bar{x},-y),
\end{align}
with $x,y,u,v \in M_2(k)=D$.  The basis of the underlying vector space is taken to be $\left\{ (E_{ij}, 0) , ( 0, E_{ij}) \right\}_{i,j=1,2}$, where the $E_{ij}$ are the standard basis elements for $2 \times 2$ matrices, and so our identity element in $C$ is $e = (E_{11} + E_{22},0)$.

\section{Automorphisms of $G_2$}

\subsection{Some results on $G_2$}

It is well known that the automorphism group of a $k$-split octonion algebra, $C$, over a field, $k$, is a $k$-split linear algebraic group of type $G_2$ over $k$.  We can compute a split maximal torus for $\Aut(C)$, where $C= \big( M_2(k), M_2(k) \big)$ as above.  Here we collect some some known results, and again we refer to \cite{SV00}.

\newtheorem{maximaltorus}[subsubsection]{Theorem}
\begin{maximaltorus}
The following statements concerning $G= \Aut(C)$ are true;
\begin{enumerate}
\item There is a maximal $k$-split torus $T \subset G$ of the form
\[ T = \left\{ \diag(1,\beta \gamma, \beta^{-1} \gamma^{-1}, 1, \gamma^{-1}, \beta, \beta^{-1}, \gamma ) \ \big| \beta, \gamma \in k^* \right\}.  \]
\item The center of $\Aut(C)$ contains only the identity.
\item For any composition algebra, $C$, over $k$.  The only nontrivial subspaces of $C$ left invariant by $\Aut(C)$ are $ke$ and $e^{\perp}$.
\item All automorphisms of $\Aut(C)$ are inner automorphisms.
\end{enumerate}
\end{maximaltorus}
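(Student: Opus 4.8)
The four parts are largely independent. I would prove the torus description (1) by a direct computation, read off the weight structure needed for (3) from that same torus, and then deduce (2) and (4) from the root data and standard structure theory.

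For (1), the plan is to define the diagonal linear map $\phi_{\beta,\gamma}$ scaling the basis vectors $(E_{ij},0)$ and $(0,E_{ij})$ by the displayed powers of $\beta$ and $\gamma$, and to check directly that it is an algebra automorphism. The only real content is verifying that the octonion product is preserved on each pair of basis elements; this check simultaneously forces the exponents of $\beta$ and $\gamma$ to be exactly those displayed (so the family is genuinely two-parameter and closed under composition) and yields compatibility with the norm and the conjugation. Hence $\{\phi_{\beta,\gamma} : \beta,\gamma\in k^*\}\cong(k^*)^2$ is a $k$-split subtorus of $\Aut(C)$, and it is maximal because the split group of type $G_2$ has $k$-rank $2$, so any two-dimensional $k$-split torus is already maximal.

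Since every $\psi\in\Aut(C)$ fixes $e$ and preserves $N$ (both being recoverable from the algebra structure), it fixes $ke$ and stabilizes $e^\perp$, so these two subspaces are invariant; the substance of (3) is that there are no others, i.e.\ that $\Aut(C)$ acts irreducibly on the seven-dimensional space $e^\perp$, and I expect this to be the main obstacle. My plan is to decompose $e^\perp$ into $T$-weight spaces: reading the diagonal of $\phi_{\beta,\gamma}$, the six vectors $(E_{12},0),(E_{21},0),(0,E_{11}),(0,E_{12}),(0,E_{21}),(0,E_{22})$ span one-dimensional spaces with the six distinct nonzero weights $\beta\gamma,(\beta\gamma)^{-1},\gamma^{-1},\beta,\beta^{-1},\gamma$ (the short roots of $G_2$), while the plane spanned by $(E_{11},0)$ and $(E_{22},0)$ meets $e^\perp$ in the one-dimensional zero-weight line $k\,(E_{11}-E_{22},0)$. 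Any invariant subspace $W$ is $T$-stable, hence a sum of weight spaces; to force $W\in\{0,e^\perp\}$ I would exhibit enough automorphisms outside $T$ — concrete unipotent elements and Weyl-group representatives realized inside $\Aut(C)$ — whose action permutes the six short-root lines transitively and mixes the zero-weight line with them. The delicate point is producing these non-toral automorphisms explicitly and tracking their effect on the weight decomposition; an alternative is to show directly that the $\Aut(C)$-submodule generated by any single nonzero $v\in e^\perp$ is all of $e^\perp$, but either route turns on having concrete non-diagonal automorphisms in hand.

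Granting the weight computation, (2) is immediate from $Z(G)=\bigcap_{\alpha\in\Phi}\ker\alpha$: two of the roots are the coordinate characters $\beta$ and $\gamma$ themselves (they occur among the nonzero weights above), and their common kernel is trivial, so $Z(\Aut(C))=\{\id\}$. Finally, for (4) I would invoke the standard description of the automorphisms of a connected semisimple group: $\Aut(G)/\Inn(G)$ is isomorphic to the symmetry group of the Dynkin diagram, and the $G_2$ diagram — two nodes joined by a triple bond — admits no nontrivial symmetry, so $\Out(G)=1$; together with the trivial center from (2) this gives $\Aut(C)=\Inn(\Aut(C))$.
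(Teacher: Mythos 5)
The paper never proves this theorem: it is presented as a collection of known facts with the sentence ``Here we collect some known results, and again we refer to \cite{SV00}'', so there is no in-paper argument to compare yours against, and your proposal has to stand on its own. On that footing, parts (1), (2) and (4) of your plan are sound. The direct check that $\phi_{\beta,\gamma}$ preserves the octonion product is a finite computation; maximality from the $k$-rank of split $G_2$ is legitimate because the paper takes as known, immediately before the theorem, that $\Aut(C)$ is the split group of type $G_2$; the identity $Z(G)=\bigcap_{\alpha\in\Phi}\ker\alpha$ together with the fact that the nonzero $T$-weights on $C$ are the short roots gives (2); and $\Out(G)=1$ from the asymmetry of the $G_2$ diagram (two nodes joined by a triple bond of unequal root lengths) is the standard argument for (4), noting that $G_2$ is both adjoint and simply connected. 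Your weight computation itself is correct: the six lines carry the distinct characters $\beta\gamma,(\beta\gamma)^{-1},\gamma^{-1},\beta,\beta^{-1},\gamma$ and $e^\perp$ meets the zero-weight plane in $k(E_{11}-E_{22},0)$.

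The genuine gap is in (3), and it is exactly the point you flagged and then deferred. Your argument reduces everything to exhibiting automorphisms outside $T$ that permute the six nonzero-weight lines transitively and mix the zero-weight line with them, but you never produce a single such element, and without them nothing excludes, say, the $T$-invariant line $k(E_{11},0)$ or the sum of the six root lines. Producing them is the actual content of the statement; note the paper itself has concrete candidates available (the element $s$ of Section 3.3, which represents the longest Weyl element and inverts $T$, and the maps $s_{dp}$ of Section 3.5), so the plan is completable, but as written it is a reduction, not a proof. A second, scope-level defect: statement (3) is asserted for \emph{any} composition algebra over $k$, including quaternion algebras, two-dimensional algebras, and division octonion algebras, whose automorphism groups contain no $k$-split maximal torus at all (and over finite fields the rational points of $T$ can fail to separate your six characters, e.g.\ $\beta$ and $\beta^{-1}$ agree on $T(\mathbb{F}_3)$). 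Your weight-space argument covers only the split octonion case, so even after the missing elements are supplied you would need either a base-change/density argument or a characteristic-free direct argument, as in \cite{SV00}, to get the statement as written.
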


\subsection{$k$-involutions of $G$}

\newtheorem{kinv}[subsubsection]{Remark}
\begin{kinv}
If $\theta \in \Inn(G)$ and $\theta =  \mathcal{I}_t$ is a $k$-involution then $t^2 \in Z(G)$.
\end{kinv}

Since groups of type $G_2$ have a trivial center, the problem of classifying $k$-involutions for $\Aut(C)$, where $C$ is a split octonion algebra, is the same as classifying the conjugacy classes of elements of order $2$ in $\Aut(C)$ that preserve the $k$-structure of $\Aut(C)$. \

\newtheorem{torusinv}[subsubsection]{Remark}
\begin{torusinv}
\label{torusinv}
The involutions that are of the form $\mathcal{I}_t$ where 
\[ t_{(\beta, \gamma)} \in T = \left\{ \diag(1,\beta \gamma, \beta^{-1} \gamma^{-1}, 1, \gamma^{-1}, \beta, \beta^{-1}, \gamma ) \ \big| \beta, \gamma \in k^* \right\} \]
have $(\beta, \gamma) = (1,-1),(-1,1)$ or $(-1,-1)$.
\end{torusinv}
\begin{proof}
We set $\diag(1,\beta \gamma, \beta^{-1} \gamma^{-1}, 1, \gamma^{-1}, \beta, \beta^{-1}, \gamma )^2 = \id$ and exclude $(\beta, \gamma)=(1,1)$ which corresponds to the identity map.
\end{proof}

Using the above statement and the following result of Jacobson we can show that all $k$-involutions given by conjugation by elements coming from the maximal $k$-split torus $T$ are isomorphic.

\newtheorem{jake}[subsubsection]{Proposition, \cite{Ja58}}
\begin{jake}
\label{jake}
Let $C$ be an octonion algebra over $k$, then the conjugacy class of quadratic elements, $t\in G=\Aut(C)$ such that $t^2=\id$ are in bijection with the isomorphism classes of quaternion subalgebras of $C$.
\end{jake}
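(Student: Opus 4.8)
The plan is to set up a bijection at the level of individual elements first and then pass to classes. Given an involution $t \in G = \Aut(C)$ with $t \neq \id$ and $t^2 = \id$, I would write $C = C^+ \oplus C^-$ for the decomposition into the $(+1)$- and $(-1)$-eigenspaces of $t$. Since $t$ is an automorphism, $C^+$ is a subalgebra containing $e$, and the product respects the grading: $t(uv) = t(u)t(v)$ gives $C^+C^+ \subseteq C^+$, $C^+C^- \subseteq C^-$, $C^-C^+ \subseteq C^-$, and $C^-C^- \subseteq C^+$. Because $t$ preserves $N$, the two eigenspaces are orthogonal for $\langle\ ,\ \rangle$; as $N$ is nondegenerate on $C$ and $C^- = (C^+)^\perp$, the form restricts nondegenerately to each, so $C^+$ is a composition subalgebra. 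The first goal is to show $\dim C^+ = 4$, so that $C^+$ is a quaternion subalgebra and the assignment $t \mapsto C^+$ is the desired correspondence.

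To pin down the dimension I would fix $a \in C^-$ with $N(a) \neq 0$ (available since $N$ is nondegenerate on $C^-$ and $C^- \neq 0$ as $t \neq \id$) and prove $C^- = C^+ a$. Since $e \in C^+$, the element $a$ is orthogonal to $e$, so $\bar a = -a \in C^-$ and $a^{-1} = \bar a / N(a) \in C^-$. For $y \in C^-$ the grading gives $ya^{-1} \in C^-C^- \subseteq C^+$, and alternativity yields $y = (ya^{-1})a \in C^+ a$; conversely $C^+ a \subseteq C^+C^- \subseteq C^-$. Hence $C^- = C^+ a$, so $\dim C^- = \dim C^+$ and $\dim C^+ = 4$. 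By Hurwitz's theorem (Theorem \ref{hurwitz}) a four-dimensional composition algebra is a quaternion algebra, and $C = C^+ \oplus C^+ a$ is exactly the doubling of $C^+$.

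For the inverse construction, given a quaternion subalgebra $D \subseteq C$ I would use the doubling decomposition $C = D \oplus Da$ and check that $\sigma_D : x + ya \mapsto x - ya$ is an automorphism by a direct computation against the product $(x+ya)(u+va) = (xu + \alpha \bar v y) + (vx + y\bar u)a$; it is manifestly an involution with fixed subalgebra $D$. Since an involution $t$ is determined on $C^- = (C^+)^\perp$ as $-\id$, the maps $t \mapsto C^+$ and $D \mapsto \sigma_D$ are mutually inverse, establishing a bijection between involutions of $G$ and quaternion subalgebras of $C$.

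Finally I would pass to classes. The correspondence is $\Aut(C)$-equivariant: for $g \in G$ the fixed subalgebra of $g t g^{-1}$ is $g(C^+)$, so conjugacy classes of involutions correspond bijectively to $\Aut(C)$-orbits of quaternion subalgebras. It then remains to identify these orbits with isomorphism classes, that is, to show two quaternion subalgebras are $\Aut(C)$-conjugate if and only if they are isomorphic as algebras. One direction is immediate, as the restriction of an automorphism of $C$ is an algebra isomorphism between the subalgebras. The converse — that every isomorphism between composition subalgebras of $C$ extends to an automorphism of $C$ — is the main obstacle, and it is precisely the extension theorem for composition subalgebras of Jacobson \cite{Ja58} (see also \cite{SV00}); granting it, the $\Aut(C)$-orbits of quaternion subalgebras coincide with their isomorphism classes, which completes the bijection.
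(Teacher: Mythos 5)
Your proposal is correct, but it is worth noting that the paper itself offers no proof of this statement: it is quoted as a theorem of Jacobson (the environment header is ``Proposition, \cite{Ja58}''), and the only accompanying argument is an informal remark covering the easy direction --- an order-$2$ automorphism $t$ fixes a quaternion subalgebra $D$ elementwise (this is asserted, not proved), and if $s = gtg^{-1}$ then $s$ fixes $D' = g(D) \cong D$ elementwise. Your write-up supplies genuine content beyond that remark: the eigenspace grading $C = C^+ \oplus C^-$, the argument via $C^- = C^+ a$ and alternativity forcing $\dim C^+ = 4$ (exactly the assertion the paper leaves to \cite{Ja58}), the inverse construction $D \mapsto \sigma_D$ showing both that every quaternion subalgebra arises and that $t$ is recovered from its fixed subalgebra, and the equivariance statement that reduces the whole proposition to one claim: abstractly isomorphic quaternion subalgebras of $C$ are conjugate under $\Aut(C)$. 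You correctly isolate that claim as the crux and delegate it to the extension theorem for composition subalgebras (Jacobson \cite{Ja58}; Corollary 1.7.3 of \cite{SV00}); since that is a separately stated standard theorem rather than a restatement of the proposition, this is a legitimate reduction and not circular, and it mirrors how the bijection is derived in the literature. Two facts you use silently and should flag with a reference to \cite{SV00}: automorphisms of a composition algebra are isometries of $N$ (they preserve the generic minimal polynomial $x^2 - \langle x,e\rangle x + N(x)e = 0$), which is what makes the eigenspaces orthogonal; and composition algebras are alternative, which is what licenses $(ya^{-1})a = y$ and the injectivity of right multiplication by $a$ needed for $\dim C^- = \dim C^+$.
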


In particular if $t \in \Aut(C)$ has order $2$, then it leaves some quaternion subalgebra $D$ elementwise fixed giving us the eigenspace corresponding to the eigenvalue $1$.  Then $D^{\perp}$ is the eigenspace corresponding to the eigenvalue $-1$.  If $gtg^{-1}=s$ for some $g\in G$, then $s$ has order $2$ and $g(D)=D'$, $D'$ a quaternion subalgebra  elementwise fixed by $s$, and $D\cong D'$.

\newtheorem{qsubalg}[subsubsection]{Corollary}
\begin{qsubalg}
Let $C$ be an octonion algebra over $k$ and $D$ and $D'$ quaternion subalgebras of $C$.  If $s,t \in G = \Aut(C)$ are elements of order $2$ and $s,t$ fix $D,D'$ elementwise respectively, then $s \cong t$ if and only if $D \cong D'$ over $k$.
\end{qsubalg}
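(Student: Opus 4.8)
The plan is to read off the corollary directly from the bijection of Proposition \ref{jake}, so that almost all the work is already done; what remains is to make that correspondence explicit and to verify both implications. The guiding fact, recorded in the discussion following Proposition \ref{jake}, is that an order-$2$ element of $G = \Aut(C)$ both determines and is determined by the quaternion subalgebra it fixes: the $(+1)$-eigenspace of such an element is precisely a quaternion subalgebra, and the $(-1)$-eigenspace is its orthogonal complement. Since $\dim_k C = 8$ and $\dim_k D = 4 = \dim_k D^{\perp}$, the decomposition $C = D \oplus D^{\perp}$ into $\pm 1$ eigenspaces is forced, so the fixed subalgebra of an order-$2$ automorphism is uniquely determined. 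I would use this uniqueness in both directions.

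For the reverse implication ($D \cong D' \Rightarrow s \cong t$), I would argue as follows. By hypothesis $s$ fixes $D$ elementwise and $t$ fixes $D'$ elementwise, and by the uniqueness just noted these are exactly the fixed subalgebras attached to $s$ and $t$ under the bijection of Proposition \ref{jake}. That bijection matches the conjugacy class of an order-$2$ element with the isomorphism class of its fixed quaternion subalgebra; hence if $D \cong D'$ over $k$, then $s$ and $t$ lie in the same conjugacy class, i.e. $s \cong t$.

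For the forward implication ($s \cong t \Rightarrow D \cong D'$), I would take $g \in G$ with $g s g^{-1} = t$. Since $g$ is an algebra automorphism of $C$, it carries the $(+1)$-eigenspace of $s$ isomorphically onto the $(+1)$-eigenspace of $g s g^{-1} = t$; that is, $g(D)$ is a quaternion subalgebra fixed elementwise by $t$. By uniqueness of the fixed subalgebra, $g(D) = D'$, and the restriction $g|_D \colon D \to D'$ is then a $k$-algebra isomorphism, giving $D \cong D'$.

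The only genuine subtlety, and the step I would be most careful about, is the meaning of ``$s \cong t$'' and its identification with $G$-conjugacy. Here $s \cong t$ refers to isomorphy of the induced involutions $\mathcal{I}_s$ and $\mathcal{I}_t$; since every automorphism of $\Aut(C)$ is inner and the center of $\Aut(C)$ is trivial, the identity $\mathcal{I}_g \mathcal{I}_s \mathcal{I}_g^{-1} = \mathcal{I}_{g s g^{-1}}$ equals $\mathcal{I}_t$ exactly when $g s g^{-1} = t$. Thus isomorphy of the involutions coincides with conjugacy of $s$ and $t$ in $G$, which is precisely the relation classified by Proposition \ref{jake}. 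With this identification in place, the corollary is a direct translation of that proposition, and no further computation is needed.
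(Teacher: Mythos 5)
Your proposal is correct and follows essentially the same route as the paper: the paper treats this corollary as an immediate consequence of Proposition \ref{jake} together with the discussion after it (an order-$2$ automorphism's $(+1)$-eigenspace is exactly the quaternion subalgebra it fixes, and conjugation by $g$ carries the fixed subalgebra of $s$ to that of $gsg^{-1}$), which is precisely the two-directional argument you spell out. Your extra care in identifying $s \cong t$ with conjugacy in $G$ via the trivial center and the fact that all automorphisms of $\Aut(C)$ are inner is a worthwhile clarification, not a deviation.
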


\newtheorem{conjint2}[subsubsection]{Corollary}
\begin{conjint2}
\label{conjint2}
For $s,t \in \Aut(C)$, $\mathcal{I}_t \cong \mathcal{I}_s$ if and only if $s$ and $t$ leave isomorphic quaternion subalgebras invariant.
\end{conjint2}

We can begin looking for elements of order $2$ in the $k$-split maximal torus we have computed, which in our case will be $t \in T$, $t^2 = \id$.  Solving the following equation
\[ \diag(1,\beta \gamma, \beta^{-1} \gamma^{-1}, 1, \gamma^{-1}, \beta, \beta^{-1}, \gamma )^2 = \id, \]
we obtain the elements
\[ t_{(\beta,\gamma)} \text{ where } (\beta ,\gamma) = (\pm 1,\pm 1), \text{ and } (\beta,\gamma) \neq (1,1). \]

\newtheorem{normalconjlemma}[subsubsection]{Lemma}
\begin{normalconjlemma}
\label{normalconjlemma}
$\I_g \I_{\varepsilon}\I_g^{-1} = \I_{g\epsilon g^{-1}}$
\end{normalconjlemma}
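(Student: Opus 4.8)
The plan is to treat both sides as honest automorphisms of $G$ and to verify that they act identically on an arbitrary element $x \in G$; since an automorphism is determined by its values, this establishes the equality. First I would record the two ingredients I need from the definitions given earlier: that $\mathcal{I}_g(x) = gxg^{-1}$, and that $\mathcal{I}_g^{-1} = \mathcal{I}_{g^{-1}}$. The latter follows at once, because $\mathcal{I}_g\big(\mathcal{I}_{g^{-1}}(x)\big) = g(g^{-1}xg)g^{-1} = x$, so $\mathcal{I}_g \circ \mathcal{I}_{g^{-1}} = \id$.

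Next I would evaluate the left-hand side on $x$ by peeling off the three maps in right-to-left order. Applying $\mathcal{I}_g^{-1} = \mathcal{I}_{g^{-1}}$ first gives $g^{-1}xg$; applying $\mathcal{I}_{\varepsilon}$ then gives $\varepsilon g^{-1} x g \varepsilon^{-1}$; and finally applying $\mathcal{I}_g$ gives $g\varepsilon g^{-1} x g \varepsilon^{-1} g^{-1}$. Regrouping the outer factors, and using $(g\varepsilon g^{-1})^{-1} = g\varepsilon^{-1}g^{-1}$, identifies this expression with $(g\varepsilon g^{-1})\,x\,(g\varepsilon g^{-1})^{-1} = \mathcal{I}_{g\varepsilon g^{-1}}(x)$, which is exactly the right-hand side evaluated at $x$. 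As $x$ was arbitrary, the two automorphisms coincide.

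There is no genuine obstacle here: this is the standard fact that $\Inn(G)$ is normal in $\Aut(G)$, recorded so that it can be invoked later when tracking isomorphy classes of involutions of the form $\theta \circ \mathcal{I}_a$. The only points needing a little care are the composition convention, so that the three maps are applied in the correct order, and the inversion identity $(g\varepsilon g^{-1})^{-1} = g\varepsilon^{-1}g^{-1}$, both of which are routine.
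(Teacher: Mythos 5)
Your proof is correct and is essentially identical to the paper's: both evaluate $\I_g\I_{\varepsilon}\I_g^{-1}$ on an arbitrary element, using $\I_g^{-1}=\I_{g^{-1}}$ (which the paper leaves implicit), and regroup the factors to recognize $\I_{g\varepsilon g^{-1}}$. No differences worth noting.
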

\begin{proof}
We can apply the left hand side to an element $y\in G$,
\begin{align*}
\I_g\I_{\varepsilon}\I_g^{-1} (y) &= g \big( \varepsilon (g^{-1} y g) \varepsilon^{-1} \big) g^{-1} \\
&= (g \varepsilon g^{-1}) y (g \varepsilon g^{-1})^{-1} \\
&= \I_{(g \varepsilon g^{-1})}(y).
\end{align*}
\end{proof}

\newtheorem{normalconj}[subsubsection]{Proposition}
\begin{normalconj}
If $\varepsilon_1^2=\varepsilon_2^2 = \id$ and $\varepsilon_1, \varepsilon_2  \in T$ a maximal torus of $G$ when $Z(G)=\{ \id \}$, then $\I_{\varepsilon_1} \cong \I_{\varepsilon_2}$ if and only if $\varepsilon_1 = n \varepsilon_2 n^{-1}$ for some $n \in N_G(T)$.
\end{normalconj}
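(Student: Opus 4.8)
The plan is to reduce the isomorphy statement to a purely group-theoretic conjugacy statement about the torus elements $\varepsilon_1,\varepsilon_2$, and then to invoke the standard fact that $G$-conjugacy of elements of a maximal torus is already detected inside the normalizer $N_G(T)$. First I would dispose of the reverse implication, which is immediate: if $\varepsilon_1 = n\varepsilon_2 n^{-1}$ with $n \in N_G(T) \subset G$, then Lemma \ref{normalconjlemma} applied with $g = n$ gives $\I_{\varepsilon_1} = \I_{n\varepsilon_2 n^{-1}} = \I_n \I_{\varepsilon_2} \I_n^{-1}$, so $\I_{\varepsilon_1}$ and $\I_{\varepsilon_2}$ are conjugate in $\Aut(G)$, i.e. $\I_{\varepsilon_1} \cong \I_{\varepsilon_2}$.

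For the forward implication I would first convert isomorphy of the two involutions into conjugacy of the $\varepsilon_i$ in $G$. Since every automorphism of $G = \Aut(C)$ is inner, any isomorphism realizing $\I_{\varepsilon_1} \cong \I_{\varepsilon_2}$ has the form $\I_g$ for some $g \in G$, and Lemma \ref{normalconjlemma} turns the relation $\I_g \I_{\varepsilon_1} \I_g^{-1} = \I_{\varepsilon_2}$ into $\I_{g\varepsilon_1 g^{-1}} = \I_{\varepsilon_2}$. Because $Z(G) = \{\id\}$, the map $g \mapsto \I_g$ is injective, so this forces $g\varepsilon_1 g^{-1} = \varepsilon_2$; equivalently, writing $h = g^{-1}$, we have $\varepsilon_1 = h\varepsilon_2 h^{-1}$. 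The problem is thereby reduced to upgrading this $G$-conjugacy to conjugacy by an element of $N_G(T)$.

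The heart of the argument, and the step I expect to be the main obstacle, is this upgrade. I would use that the $\varepsilon_i$, being elements of the torus $T$, are semisimple, and consider the centralizer $Z := Z_G(\varepsilon_1)$. On one hand $T \subset Z$ since $T$ is abelian and $\varepsilon_1 \in T$; on the other hand $\varepsilon_1 = h\varepsilon_2 h^{-1} \in hTh^{-1}$, so the abelian torus $hTh^{-1}$ also lies in $Z$. Hence $T$ and $hTh^{-1}$ are two maximal tori of $G$ contained in $Z$, so both are maximal tori of the connected group $Z^{\circ}$, and by conjugacy of maximal tori there is $z \in Z^{\circ}$ with $z(hTh^{-1})z^{-1} = T$. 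Setting $n := zh$ then gives $n \in N_G(T)$, while $n\varepsilon_2 n^{-1} = z(h\varepsilon_2 h^{-1})z^{-1} = z\varepsilon_1 z^{-1} = \varepsilon_1$ because $z$ centralizes $\varepsilon_1$. This produces the desired $n$ with $\varepsilon_1 = n\varepsilon_2 n^{-1}$ and completes the proof.

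The only delicate points are justifying that $T$ and $hTh^{-1}$ really are maximal tori of the centralizer and that I may work inside $Z^{\circ}$. Both tori are already maximal in $G$, hence maximal in any subgroup containing them, and being connected they lie in $Z^{\circ}$, so they are maximal tori of $Z^{\circ}$; the conjugating element $z$ lands in $Z^{\circ} \subset Z_G(\varepsilon_1)$, which is exactly what the computation needs. For $G_2$ this is especially clean, since it is a connected (indeed simply connected) semisimple group, so centralizers of semisimple elements are connected and the passage to $Z^{\circ}$ is harmless.
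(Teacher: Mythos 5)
Your proposal is correct and follows essentially the same route as the paper's own proof: both directions reduce isomorphy to $G$-conjugacy of $\varepsilon_1,\varepsilon_2$ via Lemma \ref{normalconjlemma} and triviality of the center, and then upgrade to $N_G(T)$-conjugacy by finding an element of the centralizer $Z_G(\varepsilon_1)$ carrying the conjugated torus back to $T$. Your only deviation is a welcome extra bit of care: you pass explicitly to the identity component $Z_G(\varepsilon_1)^{\circ}$ before invoking conjugacy of maximal tori, a point the paper glosses over.
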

\begin{proof}
If $n\varepsilon_2 n^{-1} = \varepsilon_1$ for $n\in N_G(T)$ then $\I_{\varepsilon_1} \cong \I_{\varepsilon_2}$ via the isomorphism $\Inn \left( \I_n \right)$. 

Now we let $\I_{\varepsilon_1} \cong \I_{\varepsilon_2}$, and so there exists a $g\in G$ such that $\I_{\varepsilon_1}=\I_g \big(\I_{\varepsilon_2} \big) \I_g^{-1}$, 
then we have by Lemma \ref{normalconjlemma},
\[ (g \varepsilon_2 g^{-1})^{-1}\varepsilon_1 y = y (g \varepsilon_2 g^{-1})^{-1}\varepsilon_1, \]
for all $y\in G$ and so $(g \varepsilon_2 g^{-1})^{-1}\varepsilon_1 \in Z(G) = \{\id \}$.  Thus we have that $\varepsilon_1^{-1} = (g \varepsilon_2 g^{-1})^{-1}$, so $\varepsilon_1 = g \varepsilon_2 g^{-1}$.  So now notice $S = g T g^{-1}$ is a maximal torus containing $\varepsilon_1$.  The group $Z_G(\varepsilon_1)$ contains $S$ and $T$, so there exists an $x\in Z_G(\varepsilon_1)$ such that $x S x^{-1} = T$.  We know that $S = gTg^{-1}$ so
\[ xg T g^{-1}x^{-1} = xg T (xg)^{-1} = T, \]
which has $xg \in N_G(T)$.  We notice that 
\begin{align*}
\I_{xg}\I_{\varepsilon_2} \I_{xg}^{-1} &= \I_{ xg \varepsilon_2 (xg)^{-1} } \\
&= \I_{ xg \varepsilon_2 g^{-1}x^{-1} } \\
&= \I_{x\varepsilon_1 x^{-1}} \\
&= \I_{\varepsilon_1},
\end{align*}
which from the previous argument we have $(xg)\varepsilon_2(xg)^{-1} = \varepsilon_1$.

\end{proof}

Using the previous proposition it is possible to find elements $n,m \in N_G(T)$ such that 
\[ t_{(-1,-1)} = n \left(t_{(-1,1)}\right) n^{-1} = m \left(t_{(1,-1)}\right) m^{-1}. \]

It is also possible to show, and perhaps more illustrative, that they leave isomorphic quaternion subalgebras invariant, and thus by Corollary \ref{conjint2} provide us with isomorphic $k$-involutions.

\newtheorem{tconj}[subsubsection]{Proposition}
\begin{tconj}
\label{tconj}
$t_{(-1,-1)} \cong t_{(-1,1)} \cong t_{(1,-1)}.$
\end{tconj}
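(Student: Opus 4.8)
The plan is to invoke Corollary \ref{conjint2}: since $Z(G)=\{\id\}$ the involutions $\mathcal{I}_{t_{(\beta,\gamma)}}$ are determined up to isomorphy by the quaternion subalgebras their generators leave invariant, so it suffices to show that $t_{(-1,-1)}$, $t_{(-1,1)}$ and $t_{(1,-1)}$ fix mutually isomorphic quaternion subalgebras of $C$. For an order-two automorphism $t$ the relevant subalgebra is its $+1$-eigenspace $D_t=\{x\in C : t(x)=x\}$, the subalgebra fixed elementwise, so the whole claim reduces to $D_{t_{(-1,-1)}}\cong D_{t_{(-1,1)}}\cong D_{t_{(1,-1)}}$ over $k$.

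First I would record the eigenvalues of each $t_{(\beta,\gamma)}$ on the basis $\{(E_{ij},0),(0,E_{ij})\}$, which are exactly the diagonal entries of the torus element. Substituting $(\beta,\gamma)=(1,-1),(-1,1),(-1,-1)$ produces in each case four entries equal to $1$ and four equal to $-1$, so each $+1$-eigenspace is $4$-dimensional. Reading them off gives $D_{t_{(-1,-1)}}=(M_2(k),0)$, the split quaternion subalgebra $D$ from the doubling, while $D_{t_{(1,-1)}}$ and $D_{t_{(-1,1)}}$ are spanned by $(E_{11},0),(E_{22},0)$ together with two of the vectors $(0,E_{ij})$.

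The cleanest way to see each $D_t$ is a quaternion subalgebra is to avoid any multiplication-table computation: because $t$ is an algebra automorphism, $t(x)=x$ and $t(y)=y$ force $t(xy)=t(x)t(y)=xy$, so $D_t$ is automatically closed under multiplication, and it contains $e=(E_{11}+E_{22},0)$ since $t(e)=e$; being a $4$-dimensional unital composition subalgebra it is a quaternion subalgebra by Theorem \ref{hurwitz}. To pin down the isomorphism class I would exhibit a nonzero isotropic vector in each: in all three eigenspaces one finds $(E_{11},0)$, with $N\big((E_{11},0)\big)=\det(E_{11})=0$ (and for the two mixed cases every spanning vector, including the relevant $(0,E_{ij})$, is likewise isotropic). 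Hence each $D_t$ has isotropic norm form, so none is a division algebra; by the division/split dichotomy each is the split quaternion algebra, and Theorem \ref{splitcomp} gives $D_{t_{(-1,-1)}}\cong D_{t_{(-1,1)}}\cong D_{t_{(1,-1)}}\cong M_2(k)$.

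Feeding this into Corollary \ref{conjint2} yields $\mathcal{I}_{t_{(-1,-1)}}\cong\mathcal{I}_{t_{(-1,1)}}\cong\mathcal{I}_{t_{(1,-1)}}$, which is the assertion. The main obstacle is purely bookkeeping: one must fix the correspondence between diagonal entries and basis vectors so that the $+1$-eigenspaces are read off correctly, since a quaternion subalgebra of a split octonion algebra need not itself be split (as the later division-quaternion cases show), so the conclusion genuinely rests on locating isotropic vectors inside each fixed subalgebra rather than on any general hyperbolicity of the octonion norm. As an alternative I would note that one could instead produce explicit $n,m\in N_G(T)$ with $t_{(-1,-1)}=n\,t_{(-1,1)}\,n^{-1}=m\,t_{(1,-1)}\,m^{-1}$ and appeal to the $N_G(T)$-conjugacy proposition, but the eigenspace argument is more transparent.
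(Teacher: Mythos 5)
Your proposal is correct and is essentially the paper's own argument: both identify the $+1$-eigenspaces of $t_{(-1,-1)}$, $t_{(1,-1)}$, $t_{(-1,1)}$ as quaternion subalgebras, show each one is split, and conclude via the uniqueness of the split quaternion algebra (Theorem \ref{splitcomp}) together with Jacobson's correspondence (Proposition \ref{jake}); your verification of splitness by exhibiting isotropic norm vectors such as $(E_{11},0)$ is equivalent to the paper's exhibition of explicit zero divisors, since $x\bar{x}=N(x)e$ turns any nonzero isotropic $x$ into a zero divisor. Two small repairs: the claim that each $+1$-eigenspace is a quaternion subalgebra is cleanest to get from Proposition \ref{jake} itself (as the paper does) rather than from Theorem \ref{hurwitz}, since the Hurwitz route also needs $N$ to restrict nondegenerately to the eigenspace (true here, because the eigenspace decomposition of the isometry $t$ is orthogonal in characteristic $\neq 2$, but you did not check it); and since the proposition asserts conjugacy of the elements themselves, your final appeal should be to the element-level corollary of Proposition \ref{jake} rather than to Corollary \ref{conjint2} --- the statement you actually derive, $\mathcal{I}_{t_{(-1,-1)}}\cong\mathcal{I}_{t_{(-1,1)}}\cong\mathcal{I}_{t_{(1,-1)}}$, is the paper's subsequent Corollary \ref{cortconj}, not Proposition \ref{tconj}.
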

\begin{proof}
Let $G=\Aut(C)\supset T = \left\{ \diag(1,\beta \gamma, \beta^{-1} \gamma^{-1}, 1, \gamma^{-1}, \beta, \beta^{-1}, \gamma ) \ \big| \ \beta, \gamma \in k^* \right\}$, then $G$ is an algebraic group over the field $k$, $\ch(k)\neq 2$.  The automorphism $t_{(-1,-1)}$ leaves the split quaternion subalgebra $(M_2(k),0)$ elementwise fixed. \

 The element of order $2$,
\[ t_{(1,-1)} = \diag(1,-1,-1,1,-1,1,1,-1), \]
leaves the quaternion subalgebra,
\[
k 
\underbrace{
\left(
\begin{bmatrix}
1 & \\
 & 1 \\
\end{bmatrix},
0
\right)
}_{e}
\bigoplus
k
\underbrace{
\left( 0,
\begin{bmatrix}
 & 1 \\
-1 &  \\
\end{bmatrix}
\right)
}_{a}
\bigoplus
k
\underbrace{
\left( 0,
\begin{bmatrix}
 & 1 \\
 1 & \\
\end{bmatrix}
\right)
}_{b}
\bigoplus
k
\underbrace{
 \left(
\begin{bmatrix}
1 & \\
 & -1 \\
\end{bmatrix},
0 \right)
}_{ab}
,
\]
elementwise fixed.  Notice that $(b-a)(e+ab)=(0,0)$, and so the quaternion subalgebra is split.

And 
\[ t_{(-1,1)} = \diag(1,-1,-1,1,1,-1,-1,1), \]
leaves the quaternion algebra
\[
k 
\underbrace{
\left(
\begin{bmatrix}
1 & \\
 & 1 \\
\end{bmatrix},
0
\right)
}_{e}
\bigoplus
k 
\underbrace{
\left(
\begin{bmatrix}
1 & \\
 & -1 \\
\end{bmatrix},
0 \right)
}_{a}
\bigoplus
k
\underbrace{
\left( 0,
\begin{bmatrix}
1 & \\
 & 1 \\
\end{bmatrix}
\right)
}_{b}
\bigoplus
k
\underbrace{
\left( 0,
\begin{bmatrix}
1 & \\
 & -1 \\
\end{bmatrix}
\right)
}_{ab},
\]
elementwise fixed.   Notice that $(ab + b)(e + a) = (0,0)$, and so the quaternion subalgebra is split.  Since over a given field $k$ every split quaternion subalgebra is isomorphic, we have that $t_{(-1,-1)} \cong t_{(-1,1)} \cong t_{(1,-1)}$.
\end{proof}

\newtheorem{cortconj}[subsubsection]{Corollary}
\begin{cortconj}
\label{cortconj}
$\mathcal{I}_{t_{(-1,-1)}} \cong \mathcal{I}_{t_{(-1,1)}} \cong \mathcal{I}_{t_{(1,-1)}}.$
\end{cortconj}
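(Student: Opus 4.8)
The plan is to read this off from Proposition \ref{tconj} with essentially no additional work, since the conjugacy of the three torus elements $t_{(-1,-1)}$, $t_{(-1,1)}$, $t_{(1,-1)}$ is already in hand there; all that remains is to transport that conjugacy from the elements themselves to the inner automorphisms they determine. I would do this via Corollary \ref{conjint2}, with a second, more hands-on route available through Lemma \ref{normalconjlemma}.

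For the direct route, recall that the proof of Proposition \ref{tconj} exhibits, for each of the three elements $t_{(\beta,\gamma)}$, an explicit quaternion subalgebra of $C$ that the element leaves elementwise fixed, and checks in every case that this subalgebra is split. By Theorem \ref{splitcomp} there is, up to isomorphism, a unique split quaternion algebra over $k$, so the three invariant subalgebras are mutually isomorphic. Corollary \ref{conjint2} then gives $\mathcal{I}_{t_{(-1,-1)}} \cong \mathcal{I}_{t_{(-1,1)}} \cong \mathcal{I}_{t_{(1,-1)}}$ immediately.

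The alternative route lifts the element-level conclusion of Proposition \ref{tconj} through Lemma \ref{normalconjlemma}. The relation $t_{(-1,-1)} \cong t_{(-1,1)}$ means, by Jacobson (Proposition \ref{jake}), that there is a $g \in G$ with $t_{(-1,-1)} = g\, t_{(-1,1)}\, g^{-1}$, whence
\[ \mathcal{I}_{t_{(-1,-1)}} = \mathcal{I}_{g t_{(-1,1)} g^{-1}} = \mathcal{I}_g\, \mathcal{I}_{t_{(-1,1)}}\, \mathcal{I}_g^{-1}, \]
and likewise $\mathcal{I}_{t_{(-1,-1)}} = \mathcal{I}_h\, \mathcal{I}_{t_{(1,-1)}}\, \mathcal{I}_h^{-1}$ for a suitable $h \in G$. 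Each of $\mathcal{I}_g, \mathcal{I}_h$ lies in $\Inn(G) \subseteq \Aut(G)$, so these identities display the three involutions as conjugate inside $\Aut(G)$, which is the meaning of $\cong$.

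I do not expect a genuine obstacle: the substantive content, namely locating the fixed quaternion subalgebras and verifying that they are split, has already been carried out in Proposition \ref{tconj}. The one point I would be careful about is not to conflate the two uses of the symbol $\cong$ --- conjugacy of the quadratic elements $t_{(\beta,\gamma)}$ in $G$ on the one hand, and conjugacy of the involutions $\mathcal{I}_{t_{(\beta,\gamma)}}$ in $\Aut(G)$ on the other. Lemma \ref{normalconjlemma} is exactly what licenses passing from the former to the latter, so I would cite it (or Corollary \ref{conjint2}, which packages the same implication) explicitly rather than treat the step as automatic.
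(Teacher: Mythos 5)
Your proposal is correct and matches the paper's intended argument: the paper derives this corollary directly from Proposition \ref{tconj} (the three elements fix split, hence mutually isomorphic, quaternion subalgebras) together with Corollary \ref{conjint2}, exactly as in your first route. Your alternative route via Lemma \ref{normalconjlemma} and the identity $\mathcal{I}_{g t g^{-1}} = \mathcal{I}_g \mathcal{I}_t \mathcal{I}_g^{-1}$ is also sound, but the main line of your argument is essentially the paper's own.
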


So from now on we refer to a representative of the congruence class containing $\mathcal{I}_{t_{(-1,-1)}}$,  $\mathcal{I}_{t_{(-1,1)}}$, and  $\mathcal{I}_{t_{(1,-1)}}$ as $\mathcal{I}_t$, when there is no ambiguity.

\newtheorem{splitinv}[subsubsection]{Lemma}
\begin{splitinv}
There is only one isomorphy class of $k$-involutions when $k$ is a finite field of order greater than $2$, complex numbers, $p$-adic fields when $p>2$, or when $k$ is a complete, totally imaginary algebraic number field.
\end{splitinv}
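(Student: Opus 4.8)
The plan is to reduce the statement first to a question about quaternion subalgebras of $C$ and then to the arithmetic of quadratic forms over $k$. To begin, I would record that every $k$-involution of $G=\Aut(C)$ is inner: the structure theorem for $\Aut(C)$ says every automorphism is inner, so a $k$-involution has the form $\mathcal{I}_t$, and by the remark that $t^2\in Z(G)$ for any such $\mathcal{I}_t$ together with the triviality of the center we obtain $t^2\in Z(G)=\{\id\}$, i.e. $t$ is an element of order $2$. Thus classifying $k$-involutions up to isomorphy is the same as classifying the order-$2$ elements of $\Aut(C)$ up to the isomorphy relation of Corollary \ref{conjint2}. By Jacobson's bijection (Proposition \ref{jake}) and Corollary \ref{conjint2}, these isomorphy classes are in bijection with the isomorphism classes of quaternion subalgebras of $C$.

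Next I would pin down which quaternion subalgebras actually occur. On one hand $C$ always contains the split quaternion subalgebra $(M_2(k),0)$ used in Proposition \ref{tconj}, so the class of $\mathcal{I}_t$ (Corollary \ref{cortconj}) is always present. On the other hand, given any quaternion $k$-algebra $D$ with norm $N_D$, the doubling $D\oplus Da$ formed with $\alpha=1$ (permissible by Theorem \ref{hurwitz} and the doubling formulas) has norm $N_D\perp(-N_D)$, which is hyperbolic; hence by Theorem \ref{splitcomp} this doubling is the split octonion algebra $C$, so $D$ embeds in $C$. Consequently the quaternion subalgebras of $C$ are, up to isomorphism, exactly the quaternion $k$-algebras, and the number of isomorphy classes of $k$-involutions equals the number of isomorphism classes of quaternion algebras over $k$. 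The Lemma is therefore equivalent to the assertion that each field in the list carries a unique quaternion algebra, necessarily the split one $M_2(k)$.

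A quaternion $k$-algebra is split exactly when its norm form, a $2$-Pfister form of dimension $4$, is isotropic (Theorem \ref{splitcomp}, together with the Pfister-form description of quaternion norms from \cite{La05}); and a $2$-Pfister form is isotropic if and only if it is hyperbolic. So the remaining task is to show that over each listed field every $2$-Pfister form is isotropic. For $\mathbb{C}$ (and any algebraically closed field) this is immediate, since every form of dimension $\ge 2$ is isotropic. For a finite field $\mathbb{F}_q$ with $q$ odd and $q>2$, every quadratic form in $\ge 3$ variables is isotropic by Chevalley--Warning, so in particular the $4$-dimensional norm form is, and $D$ is split. For the remaining local and global fields in the list one argues the same way, but the isotropy of the relevant $2$-Pfister forms must be extracted from the arithmetic of $k$ (Hasse invariants and the local and global theory of quadratic forms in \cite{La05}); this is exactly where the hypotheses on $k$ enter.

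I expect this last field-by-field verification to be the main obstacle. Ruling out anisotropic quaternion norm forms over the arithmetic fields is delicate, since for general local and global fields such forms do occur, so the argument must use the specific structure imposed by the hypotheses (the residue-characteristic condition $p>2$ and total imaginarity) rather than any formal consequence of the reduction above. Once the absence of a division quaternion algebra over $k$ is established, the conclusion is immediate: every quaternion subalgebra of $C$ is then the unique split one, so by Proposition \ref{jake} and Corollary \ref{conjint2} there is a single isomorphy class of $k$-involutions, represented by $\mathcal{I}_t$.
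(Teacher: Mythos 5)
Your reduction is essentially the paper's own: the paper's entire proof of this lemma is the one-line claim that over the listed fields only split quaternion algebras exist (citing \cite{O'M63}, \cite{SV00}), which together with Proposition \ref{jake} and Corollary \ref{conjint2} yields a single isomorphy class. You spell that reduction out carefully, add the (correct) observation that doubling with $\alpha=1$ embeds every quaternion $k$-algebra into the split octonion algebra, so the count of isomorphy classes of $k$-involutions is \emph{exactly} the count of quaternion $k$-algebras, and your verifications for $\mathbb{C}$ and for $\mathbb{F}_q$, $q$ odd (Chevalley--Warning), are sound.

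The step you postpone --- ruling out anisotropic quaternion norm forms over $\mathbb{Q}_p$, $p>2$, and over totally imaginary number fields --- is a genuine gap in your writeup, but it is a gap you could never have closed, because the lemma is false for those fields, and so is the paper's citation-proof. Every $p$-adic field carries a (unique) quaternion division algebra, e.g.\ $\left( \frac{p,N_p}{\mathbb{Q}_p} \right)$ with $N_p$ a nonsquare unit, whose norm is the unique anisotropic $4$-dimensional form over $\mathbb{Q}_p$; and any genuine totally imaginary number field carries infinitely many quaternion division algebras (ramified at even sets of finite places --- total imaginarity only removes the real places). Indeed the paper contradicts this lemma internally: Lemma \ref{nonsplitqQp} exhibits an involution over $\mathbb{Q}_p$, $p>2$, fixing a division quaternion subalgebra elementwise, and Theorem \ref{maintheorem}(2) asserts \emph{two} isomorphy classes there; via your embedding step this is exactly how a division algebra over $k$ produces the second class. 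So your instinct that ``for general local and global fields such forms do occur'' is precisely right; the true content of the lemma is the part you proved, namely the cases $k=\mathbb{C}$ and $k=\mathbb{F}_q$, $q>2$ odd, which is what survives as Theorem \ref{maintheorem}(3).
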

\begin{proof}
In these cases only split quaternion algebras exist, \cite{O'M63}, \cite{SV00}.
\end{proof}

In \cite{Yo90} Yokota talks about the maps $\gamma, \gamma_C$, and $\gamma_H$ and shows are they isomorphic, and that they are also isomorphic to any composition of maps between them.  In his paper he defines a conjugation coming from complexification.  In particular we can look at $\gamma_H$, which is the complexification conjugation on the quaternion level of an octonion algebra over $\mathbb{R}$.  If we take $u + vc \in H \oplus Hc$ where $u,v\in H$ and $c\in H^{\perp}$ his map is
	\[ \gamma_H(u+vc) = u-vc, \]
which in our presentation of the octonion algebra would look like, 
	\[ \gamma_H \left(
		\begin{bmatrix}
		u_{11} & u_{12} \\
		u_{21} & u_{22}
		\end{bmatrix},
		\begin{bmatrix}
		v_{11} & v_{12} \\
		v_{21} & v_{22}
		\end{bmatrix} \right)
		= \left(
		\begin{bmatrix}
		u_{11} & -u_{12} \\
		-u_{21} & u_{22}
		\end{bmatrix},
		\begin{bmatrix}
		v_{11} & -v_{12} \\
		-v_{21} & v_{22} 
		\end{bmatrix} \right),
		\]
and corresponds to our map $\mathcal{I}_{t_{(-1,1)}}$.

\subsection{Maximal $\theta$-split torus}

Rather than trying to find a maximal $\theta$-split torus, where $\theta \cong \mathcal{I}_t$, and then computing its maximal $k$-split subtorus, we find a $k$-involution $\theta$ that splits our already maximal $k$-split torus of the form
\[  T = \left\{ \diag(1,\beta \gamma, \beta^{-1} \gamma^{-1}, 1, \gamma^{-1}, \beta, \beta^{-1}, \gamma ) \ \big| \beta, \gamma \in k^* \right\}. \]
It is straight forward to check that
\[
s=
\begin{bmatrix}
& & & 1 \\
& & 1 & \\
& 1 & & \\
1 & & &
\end{bmatrix}
\bigoplus
\begin{bmatrix}
& & & 1 \\
& & 1 & \\
& 1 & & \\
1 & & &
\end{bmatrix}
, \]
is an element of $\Aut(C)$, where $C$ is the split octonion algebra described above over a field $k$, $\ch(k) \neq 2$.  It is immediate that $T$ is a $\mathcal{I}_s$-split torus.

\newtheorem{maxsplit}[subsubsection]{Proposition}
\begin{maxsplit}
$T$ is a maximal $(\mathcal{I}_s,k)$-split torus.
\end{maxsplit}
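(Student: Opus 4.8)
The plan is to prove maximality by a rank count, exploiting that the split group $G_2$ has $k$-rank $2$ and that $T$ already realizes that rank. First I would pin down the few facts needed about $\theta = \mathcal{I}_s$. Since $s$ is the direct sum of two copies of the $4\times 4$ anti-diagonal (reversal) permutation matrix, $s^2 = \id$; and because $s \neq \id$ while $Z(\Aut(C)) = \{\id\}$ by part (2) of the structure theorem for $\Aut(C)$, the inner automorphism $\mathcal{I}_s$ has order exactly $2$ and is a genuine involution. I would then reconfirm the claim, already noted in the text, that $T$ is $\mathcal{I}_s$-split: conjugation by $s$ reverses the order of the diagonal entries within each $4\times 4$ block, and applying this to the generic element $t_{(\beta,\gamma)} = \diag(1,\beta\gamma,\beta^{-1}\gamma^{-1},1,\gamma^{-1},\beta,\beta^{-1},\gamma)$ yields exactly $t_{(\beta,\gamma)}^{-1}$. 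Hence $T$ is $(\mathcal{I}_s,k)$-split.

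The core of the argument is that $T$ is not merely $k$-split but is a maximal torus of $G$ in the absolute sense. Indeed $G = \Aut(C)$ is split of type $G_2$, hence of rank $2$, while $T$ is cut out by the two independent parameters $\beta,\gamma \in k^*$, so $\dim T = 2 = \operatorname{rank} G$. A torus of this dimension is a maximal torus, so no torus of $G$ properly contains $T$. In particular no $(\mathcal{I}_s,k)$-split torus properly contains $T$, and therefore $T$ is maximal among $(\mathcal{I}_s,k)$-split tori, which is the assertion.

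If one prefers to phrase this without invoking ``maximal torus'' directly, the same conclusion follows from the fact that every $k$-split torus of $G$ has dimension at most the $k$-rank of $G$, which equals $2$ since $G$ is $k$-split of type $G_2$. Thus every $(\mathcal{I}_s,k)$-split torus, being in particular $k$-split, has dimension at most $2 = \dim T$; as $T$ is itself $(\mathcal{I}_s,k)$-split and attains this bound, it cannot be enlarged, so it is maximal.

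I do not expect a real obstacle in this proof. The only computational content is the verification that conjugation by $s$ inverts every element of $T$, and the structural inputs --- rank $2$ and trivial center --- are already established earlier in the paper. The single point to state with care is the coincidence that $T$ has dimension $2$, equal to both the rank and the $k$-rank of $G$: it is precisely because the maximal $k$-split torus is at the same time fully $\theta$-split and of full rank that it must be a maximal $(\theta,k)$-split torus, which is what distinguishes $\mathcal{I}_s$ from the involution $\mathcal{I}_t$ of the previous subsection.
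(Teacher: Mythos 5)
Your proposal is correct and follows essentially the same route as the paper's (very terse) proof: verify that $\mathcal{I}_s$ inverts every element of $T$, and then observe that $T$ is $k$-split and already a maximal torus of $G$, so nothing can properly contain it. Your version merely fills in the details the paper leaves implicit --- the explicit conjugation computation and the rank-$2$ dimension count --- so there is no substantive difference in approach.
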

\begin{proof}
Notice first that if $t \in T$ that $\mathcal{I}_s(t)=t^{-1}$, and next that $T$ is $k$-split and is a maximal torus.
\end{proof}

\newtheorem{scongt}[subsubsection]{Proposition}
\begin{scongt}
\label{scongt}
$\mathcal{I}_s \cong \mathcal{I}_t$
\end{scongt}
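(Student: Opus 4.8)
The plan is to apply Corollary \ref{conjint2} (equivalently Corollary \ref{qsubalg}): since both $\mathcal{I}_s$ and $\mathcal{I}_t$ are conjugations by order-$2$ elements of $\Aut(C)$, it suffices to show that $s$ and $t$ leave isomorphic quaternion subalgebras elementwise fixed. In Proposition \ref{tconj} the representative $t$ is already shown to fix a \emph{split} quaternion subalgebra, so the entire argument reduces to identifying the quaternion subalgebra fixed elementwise by $s$ and checking that it too is split. Once both fixed subalgebras are known to be split, Theorem \ref{splitcomp} guarantees they are isomorphic over $k$, and then Corollary \ref{conjint2} yields $\mathcal{I}_s \cong \mathcal{I}_t$.

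First I would read off the action of $s$ on the basis $\{(E_{ij},0),(0,E_{ij})\}$. In the ordering $(E_{11},E_{12},E_{21},E_{22})$ on each $M_2(k)$ summand, the anti-diagonal block of $s$ interchanges $E_{11}\leftrightarrow E_{22}$ and $E_{12}\leftrightarrow E_{21}$, so on each coordinate $s$ acts by $x\mapsto \left[\begin{smallmatrix} x_{22} & x_{21}\\ x_{12} & x_{11}\end{smallmatrix}\right]$. Its $+1$-eigenspace, i.e. the elementwise fixed set, is therefore $D_s=\{(x,y): x,y \in S\}$, where $S=\{\left[\begin{smallmatrix} a & b\\ b & a\end{smallmatrix}\right] : a,b\in k\}$; note $e=(\left[\begin{smallmatrix}1&0\\0&1\end{smallmatrix}\right],0)\in D_s$. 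Since $s$ has order $2$, the discussion following Proposition \ref{jake} tells us $D_s$ is a quaternion subalgebra (it is a $4$-dimensional composition subalgebra, so Theorem \ref{hurwitz} also confirms this).

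Then I would exhibit explicit zero divisors inside $D_s$ to conclude it is split, exactly mirroring the computations in Proposition \ref{tconj}. Taking $u=\left[\begin{smallmatrix}1 & 1\\ 1 & 1\end{smallmatrix}\right]$ and $w=\left[\begin{smallmatrix}1 & -1\\ -1 & 1\end{smallmatrix}\right]$, both of which lie in $S$, the octonion multiplication rule gives $(u,0)(w,0)=(uw,0)=(0,0)$ with $(u,0),(w,0)\neq(0,0)$; equivalently $N((u,0))=\det(u)=0$, so the norm form restricted to $D_s$ is isotropic and $D_s$ is split. Combining the pieces: $s$ fixes the split quaternion subalgebra $D_s$ elementwise, $t$ fixes a split quaternion subalgebra elementwise by Proposition \ref{tconj}, these are isomorphic over $k$ by Theorem \ref{splitcomp}, and hence $\mathcal{I}_s\cong\mathcal{I}_t$ by Corollary \ref{conjint2}.

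I expect the only genuine obstacle to be bookkeeping rather than mathematics: correctly pinning down the basis ordering so that the $4\times 4$ anti-diagonal blocks of $s$ translate into the fixed matrices $S$, and being confident that $D_s$ is closed under the non-associative product (which is automatic once one uses $s\in\Aut(C)$ and invariance of fixed sets of automorphisms). After that, splitness follows from the single zero-divisor computation above, and the conclusion is immediate.
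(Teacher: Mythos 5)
Your proposal is correct and takes essentially the same approach as the paper: the paper likewise exhibits the elementwise-fixed quaternion subalgebra of $s$ (exactly your $D_s$, with basis $e$, $a=(E_{12}+E_{21},0)$, $b=(0,\id)$, $ab=(0,E_{12}+E_{21})$), shows it is split by producing a zero divisor ($(b+ab)(e+a+b+ab)=0$, which plays the role of your $(u,0)(w,0)=(e+a)(e-a)=(0,0)$), and concludes from the uniqueness of the split quaternion algebra over $k$ together with the Jacobson correspondence (Corollary \ref{conjint2}). The only cosmetic difference is that the paper writes down the fixed basis directly rather than deriving it from the matrix of $s$ as you do.
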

\begin{proof}
The element $s$ is an automorphism of order $2$ of $C$, our split octonion algebra described above, that leaves the following quaternion algebra fixed elementwise,
\[
k 
\underbrace{
\left(
\begin{bmatrix}
1 & \\
 & 1 \\
\end{bmatrix},
0
\right)
}_{e}
\bigoplus
k 
\underbrace{
\left(
\begin{bmatrix}
 & 1 \\
1 &  \\
\end{bmatrix},
0 \right)
}_{a}
\bigoplus
k
\underbrace{
\left( 0,
\begin{bmatrix}
1 & \\
 & 1 \\
\end{bmatrix}
\right)
}_{b}
\bigoplus
k
\underbrace{
\left( 0,
\begin{bmatrix}
 & 1 \\
1 &  \\
\end{bmatrix}
\right)
}_{ab}.
\]
Notice that $(b+ab)(e+a+b+ab)=0$, and so the quaternion subalgebra is split.
\end{proof}

\subsection{Another isomorphy class of $k$-involutions over certain fields}

We have seen that our maximal torus $T=T_{\mathcal{I}_s}^-$, and so we can look at elements of $T$ for $k$-inner elements of $\mathcal{I}_s$ that will give us new conjugacy classes over fields for which quaternion division algebras can exist.  The fields we are interested in include the real numbers, 2-adics, and rationals.

\newtheorem{nonsplitq}[subsubsection]{Lemma}
\begin{nonsplitq}
\label{nonsplitq}
For $C$ a split octonion algebra over a field $k=\mathbb{R}, \mathbb{Q}_2, \mathbb{Q}$,
\[ s\cdot t_{(1,-1)} \in \Aut(C), \]
 leaves a quaternion division subalgebra elementwise fixed.
\end{nonsplitq}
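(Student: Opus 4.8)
The plan is to follow the template of Propositions \ref{tconj} and \ref{scongt}, but to invert the final step: instead of exhibiting a zero divisor to certify that the fixed quaternion subalgebra is split, I will compute the restricted norm form and certify that it is \emph{anisotropic}. First I would record that $s\cdot t_{(1,-1)}$ lies in $\Aut(C)$, being a product of the two automorphisms $s$ and $t_{(1,-1)}$, and then verify by a direct $8\times 8$ matrix computation that $(s\cdot t_{(1,-1)})^2=\id$; since the map is plainly not the identity, it is an automorphism of order exactly $2$. By Proposition \ref{jake} such an element fixes elementwise a quaternion subalgebra $D\subset C$, namely its $+1$-eigenspace, with $D^{\perp}$ the $(-1)$-eigenspace.

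Next I would compute that $+1$-eigenspace explicitly by applying $s\cdot t_{(1,-1)}$ to the standard basis $\{(E_{ij},0),(0,E_{ij})\}$. The map swaps $(E_{11},0)\leftrightarrow(E_{22},0)$, sends $(E_{12},0)\mapsto-(E_{21},0)$ and $(E_{21},0)\mapsto-(E_{12},0)$, sends $(0,E_{11})\mapsto-(0,E_{22})$ and $(0,E_{22})\mapsto-(0,E_{11})$, and swaps $(0,E_{12})\leftrightarrow(0,E_{21})$. Reading off the fixed vectors gives the four-dimensional subalgebra
\[ D = k\,e \oplus k\,a \oplus k\,b \oplus k\,(ab), \]
with $e=(E_{11}+E_{22},0)$, $a=(E_{12}-E_{21},0)$, $b=(0,E_{11}-E_{22})$, and $ab=(0,E_{12}+E_{21})$. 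A short computation with the octonion product gives $a^2=b^2=-e$, so that $D\cong\left(\frac{-1,-1}{k}\right)$.

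Now I would restrict the norm $N\big((x,y)\big)=\det(x)-\det(y)$ to a general element $\alpha e+\beta a+\gamma b+\delta\,(ab)$ of $D$. Writing out the two $2\times 2$ blocks and taking determinants yields $N=\alpha^2+\beta^2+\gamma^2+\delta^2$, i.e. the norm form of $D$ is $\langle 1,1,1,1\rangle$, in agreement with the $2$-Pfister form of $\left(\frac{-1,-1}{k}\right)$. Since a composition algebra is a division algebra precisely when its norm form is anisotropic, it remains only to check that $\langle 1,1,1,1\rangle$ is anisotropic over each of $k=\mathbb{R},\mathbb{Q},\mathbb{Q}_2$.

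This last step is where the three fields must be handled separately, and it is the only genuine obstacle. Over $\mathbb{R}$ the form $\langle 1,1,1,1\rangle$ is positive definite, hence anisotropic, and this immediately forces anisotropy over the subfield $\mathbb{Q}$ as well. The substantive case is $k=\mathbb{Q}_2$, where positivity is unavailable: here I would invoke the local theory of quadratic forms of \cite{La05} (see also \cite{O'M63}), namely that the Hilbert symbol $(-1,-1)_2=-1$, so that $\left(\frac{-1,-1}{\mathbb{Q}_2}\right)$ is the unique quaternion division algebra over $\mathbb{Q}_2$ and its norm form $\langle 1,1,1,1\rangle$ is the unique anisotropic quaternary form there. (Consistently with the split-fields lemma above, $(-1,-1)_p=1$ for every odd $p$, which is exactly why this construction yields a division algebra only at the prime $2$.) Combining the three cases shows that $D$ is a quaternion division subalgebra left elementwise fixed by $s\cdot t_{(1,-1)}$, as claimed.
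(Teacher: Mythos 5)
Your proof is correct and follows essentially the same route as the paper: both exhibit the same fixed quaternion subalgebra $ke \oplus k(E_{12}-E_{21},0) \oplus k(0,E_{11}-E_{22}) \oplus k(0,E_{12}+E_{21})$, identify its norm form as $\langle 1,1,1,1\rangle$, i.e. the $2$-Pfister form $\left( \frac{-1,-1}{k} \right)$, and conclude that this is a quaternion division algebra over each of $\mathbb{R}$, $\mathbb{Q}$, and $\mathbb{Q}_2$. The only differences are expository: the paper simply notes that every basis element has norm $1$ and cites the resulting Pfister form, whereas you additionally verify the order-$2$ and eigenspace computations and spell out the anisotropy of $\langle 1,1,1,1\rangle$ field by field (positive definiteness over $\mathbb{R}$, restriction for $\mathbb{Q}$, and the Hilbert symbol $(-1,-1)_2=-1$ for $\mathbb{Q}_2$).
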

\begin{proof}
The element $s\cdot t_{(1,-1)} \in \Aut(C)$ leaves the following quaternion subalgebra elementwise fixed,
\[
k 
\underbrace{
\left(
\begin{bmatrix}
1 & \\
 & 1 \\
\end{bmatrix},
0
\right)
}_{e}
\bigoplus
k
\underbrace{
\left( 0,
\begin{bmatrix}
1 & \\
 & -1 \\
\end{bmatrix}
\right)
}_{a}
\bigoplus
k
\underbrace{
\left( 0,
\begin{bmatrix}
 & 1 \\
1 &  \\
\end{bmatrix}
\right)
}_{b}
\bigoplus
k 
\underbrace{
\left(
\begin{bmatrix}
 & 1 \\
-1 &  \\
\end{bmatrix},
0 \right)
}_{ab}.
\]
All basis elements are such that $x \bar{x} = 1$, and so have a norm isomorphic to the 2-Pfister form $\left( \frac{-1,-1}{k} \right)$, where $k=\mathbb{R}, \mathbb{Q}_2, \mathbb{Q}$, which corresponds to a quaternion division algebra over each respective field.  Moreover, over $k=\mathbb{R}$ or $\mathbb{Q}_2$ there is only one quaternion division algebra up to isomorphism.
\end{proof}

\newtheorem{nonsplitqQp}[subsubsection]{Lemma}
\begin{nonsplitqQp}
\label{nonsplitqQp}
For $C$ a split octonion algebra over a field $k=\mathbb{Q}_p$ and $p>2$ $s \cdot t_{(-N_p,-pN_p^{-1})}$ leaves a divison quaternion algebra elementwise fixed.
\end{nonsplitqQp}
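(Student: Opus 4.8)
The plan is to mirror the proofs of Lemma~\ref{nonsplitq} and Proposition~\ref{scongt}: exhibit the quaternion subalgebra left elementwise fixed by $s\cdot t_{(-N_p,-pN_p^{-1})}$ as an explicit $+1$-eigenspace, compute its norm as a $2$-Pfister form, and then verify over $\mathbb{Q}_p$ with $p>2$ that this Pfister form is anisotropic, so the subalgebra is a division algebra. Here $N_p$ denotes a fixed non-square unit of $\mathbb{Q}_p^*$, so that $\{1,N_p,p,pN_p\}$ represents the four classes of $\mathbb{Q}_p^*/(\mathbb{Q}_p^*)^2$.

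First I would record the action of $M:=s\cdot t_{(\beta,\gamma)}$ on the basis $\{(E_{ij},0),(0,E_{ij})\}$: the torus element scales each basis vector while $s$ reverses each of the two $4\times 4$ blocks, so $M$ carries every basis vector to a scalar multiple of another. Since $T$ is $\mathcal{I}_s$-split, $sts^{-1}=t^{-1}$ for $t\in T$, whence $s\,t_{(\beta,\gamma)}=t_{(\beta,\gamma)}^{-1}s$ and $\left(s\,t_{(\beta,\gamma)}\right)^2=\id$; thus $M$ is an automorphism of order $2$. By the discussion following Proposition~\ref{jake}, its $+1$-eigenspace is a quaternion subalgebra, and solving $Mv=v$ should produce a basis of the shape
\[ e=\left(\begin{bmatrix}1 & \\ & 1\end{bmatrix},0\right),\quad a=\left(0,\begin{bmatrix}1 & \\ & \gamma^{-1}\end{bmatrix}\right),\quad b=\left(0,\begin{bmatrix} & 1\\ \beta & \end{bmatrix}\right), \]
together with their product $ab$, exactly as in Lemma~\ref{nonsplitq} but deformed by the new torus parameters. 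I only need to confirm these four vectors are closed under the octonion product and span.

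Next I would compute the squares using the doubling multiplication $(x,y)(u,v)=(xu+\bar v y, vx+y\bar u)$ and the bar involution on $M_2(k)$; this gives $a^2=\gamma^{-1}e$ and $b^2=-\beta e$, so the fixed subalgebra is $\left(\frac{\gamma^{-1},-\beta}{\mathbb{Q}_p}\right)$. Substituting $(\beta,\gamma)=(-N_p,-pN_p^{-1})$ and reducing modulo squares (using $p^{-1}\equiv p$) yields $\left(\frac{-pN_p,N_p}{\mathbb{Q}_p}\right)$. I would then verify this is a division algebra by showing its norm form is anisotropic, most cleanly through the Hilbert symbol $(-pN_p,N_p)_p=\left(\frac{N_p}{p}\right)=-1$ for $p$ odd and $N_p$ a non-square unit; equivalently, one identifies it with the standard division quaternion algebra $\left(\frac{N_p,p}{\mathbb{Q}_p}\right)$, the unique one over $\mathbb{Q}_p$.

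The main obstacle is not conceptual but bookkeeping: correctly extracting the fixed basis in the chosen ordering of the octonion basis, carrying out the square-class reduction, and performing the $p$-adic anisotropy check. The real content is that the parameters $-N_p$ and $-pN_p^{-1}$ are engineered precisely so that one Pfister slot becomes the non-square unit $N_p$ and the other becomes $p$ up to squares, which is exactly the combination forcing the Hilbert symbol to equal $-1$; any parameters landing in the same square classes would serve equally well.
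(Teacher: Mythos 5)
Your proposal is correct and follows essentially the same route as the paper: exhibit the explicit quaternion subalgebra fixed elementwise by $s\cdot t_{(-N_p,-pN_p^{-1})}$, read off its $2$-Pfister form, and identify it as the unique quaternion division algebra over $\mathbb{Q}_p$ (your basis differs from the paper's only because you compose $s$ and $t$ in the opposite order, which replaces the fixed subalgebra by a conjugate, isomorphic one, and your form $\left(\frac{-pN_p,\,N_p}{\mathbb{Q}_p}\right)$ has Hilbert symbol $\left(\frac{N_p}{p}\right)=-1$, matching the paper's $\left(\frac{p,\,N_p}{\mathbb{Q}_p}\right)$). The only imprecision is the closing remark that the slots become $N_p$ and $p$ ``up to squares'': the class of $-pN_p$ is $p$ or $pN_p$ depending on whether $-1$ is a square in $\mathbb{Q}_p$, but this does not affect the Hilbert-symbol computation or the conclusion.
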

\begin{proof}
The element $s \cdot t_{(-N_p,-pN_p^{-1})}$ leaves the following quaternion subalgebra elementwise fixed,
\[
k 
\underbrace{
\left(
\begin{bmatrix}
1 & \\
 & 1 \\
\end{bmatrix},
0
\right)
}_{e}
\bigoplus
k
\underbrace{
\left( 0,
\begin{bmatrix}
 & -N_p \\
 1 &  \\
\end{bmatrix}
\right)
}_{a}
\bigoplus
k
\underbrace{
\left( \begin{bmatrix}
 & p \\
1 &  \\
\end{bmatrix},0
\right)
}_{b}
\bigoplus
k 
\underbrace{
\left(0,
\begin{bmatrix}
N_p &  \\
 & -p \\
\end{bmatrix}
 \right)
}_{ab},
\]
with $\mathbb{Q}_p^*/ (\mathbb{Q}_p^*)^2 = \{ 1, p, N_p, pN_p\}$.  This algebra is ismorphic to $\left( \frac{p,N_p}{\mathbb{Q}_p} \right)$, which is a representative of the unique ismorphy class of quaternion division algebras for a given $p$.
\end{proof}

\newtheorem{maintheorem}[subsubsection]{Theorem}
\begin{maintheorem}
\label{maintheorem}
Let $\theta = \mathcal{I}_s$ and $G=\Aut(C)$ where $C$ is a split octonion algebra over a field $k$, then
\begin{enumerate}  
\item when $k=\mathbb{R}, \mathbb{Q}, \mathbb{Q}_2$;  $\theta$ and $\theta \circ \mathcal{I}_{ t_{(1,-1)} }$ are representatives of $2$ isomorphy classes of $k$-involutions of $G$.  In the cases $k=\mathbb{R}$ or $\mathbb{Q}_2$ these are the only cases, but this is not true for $k=\mathbb{Q}$.
\item when $k=\mathbb{Q}_p$ and $p>2$, we have two isomorphy classes of $k$-involutions.
\item when $k=\mathbb{C}$ and $\mathbb{F}_p$ with $p>2$; there is only one isomorphy class of $k$-involutions of $G$.
\end{enumerate}
\end{maintheorem}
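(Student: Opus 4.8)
The plan is to reduce the whole classification to counting isomorphism classes of quaternion subalgebras of the split octonion algebra $C$, and then to read off those counts field by field. First I would record the global reduction. Because $\Aut(C)$ has trivial center and every automorphism is inner, any $k$-involution has the form $\mathcal{I}_t$ with $t^2 \in Z(G) = \{\id\}$, so $t$ is an order-$2$ element of $\Aut(C)$; by the discussion following Proposition \ref{jake}, such a $t$ fixes a unique quaternion subalgebra $D \subset C$ elementwise (its $+1$-eigenspace, with $D^{\perp}$ the $-1$-eigenspace). Corollary \ref{conjint2} then gives $\mathcal{I}_s \cong \mathcal{I}_t$ over $k$ if and only if the associated quaternion subalgebras are $k$-isomorphic. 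Hence the set of isomorphy classes of $k$-involutions injects into the set of isomorphism classes of quaternion $k$-subalgebras of $C$, which already furnishes the \emph{upper} bounds we need.

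Second, I would note that this correspondence is in fact onto the set of all quaternion $k$-algebras. For any quaternion algebra $D$ with $2$-Pfister norm form $q_D$, the form $q_D \perp (-q_D)$ is hyperbolic of dimension $8$, hence isometric to the (unique) $8$-dimensional hyperbolic form, which is precisely the norm of the split $C$. Thus $q_D$ is a subform of the octonion norm, $D$ embeds in $C$ as a composition subalgebra (Springer--Veldkamp, via the doubling Proposition), and the order-$2$ automorphism that is the identity on $D$ and $-1$ on $D^{\perp}$ realizes the corresponding class. Consequently the number of isomorphy classes of $k$-involutions equals the number of isomorphism classes of quaternion algebras over $k$; this surjectivity is what I would use to settle the incompleteness assertion over $\mathbb{Q}$.

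With this correspondence in hand, each part is a citation of the relevant arithmetic together with the explicit witnesses already constructed. For part (3), over $\mathbb{C}$ and over $\mathbb{F}_p$ with $p>2$ the only quaternion algebra is the split one (trivial Brauer group; equivalently Theorem \ref{splitcomp}), so the upper bound is $1$ and $\theta = \mathcal{I}_s$ realizes it via Proposition \ref{scongt}. For parts (1) and (2) over $\mathbb{R}$, $\mathbb{Q}_2$, and $\mathbb{Q}_p$ with $p>2$ there are exactly two quaternion algebras, the split $M_2(k)$ and the unique division one, so the upper bound is $2$; $\theta=\mathcal{I}_s$ realizes the split class (Proposition \ref{scongt}), while Lemma \ref{nonsplitq} (for $\mathbb{R},\mathbb{Q}_2$, using $s\cdot t_{(1,-1)}$) and Lemma \ref{nonsplitqQp} (for $\mathbb{Q}_p$, $p>2$) exhibit an element fixing a division quaternion subalgebra, giving a second class. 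Since $\mathcal{I}$ is a homomorphism, $\mathcal{I}_s \circ \mathcal{I}_{t_{(1,-1)}} = \mathcal{I}_{s\,t_{(1,-1)}}$, so these are genuinely $\theta$ and $\theta\circ\mathcal{I}_{t_{(1,-1)}}$, and they are non-isomorphic by Corollary \ref{conjint2} because split and division quaternions are not $k$-isomorphic. For $\mathbb{Q}$ the same two elements still yield two distinct classes (split versus $\left(\frac{-1,-1}{\mathbb{Q}}\right)$), but the Brauer group of $\mathbb{Q}$ has infinitely many quaternion classes, each realized by the surjectivity of step two, so these two do not exhaust the $k$-involutions --- this is the asserted failure of completeness over $\mathbb{Q}$.

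The main obstacle I anticipate is not the algebra but the bookkeeping attached to the explicit witnesses. I must confirm that the torus elements in Lemmas \ref{nonsplitq} and \ref{nonsplitqQp} are order-$2$ $k$-automorphisms (so the compositions are bona fide $k$-involutions) and that the subalgebra each fixes has exactly the claimed $2$-Pfister norm form, then match these against the correct field-specific count of quaternion algebras; the $\mathbb{Q}_p$ case in particular forces the representatives parametrized by $\mathbb{Q}_p^*/(\mathbb{Q}_p^*)^2$ rather than the naive $t_{(1,-1)}$. All of the genuine content is thus localized in the norm-form identifications, which the preceding lemmas already supply, and in invoking the standard Brauer-group computations for $\mathbb{C}$, $\mathbb{F}_q$, $\mathbb{R}$, $\mathbb{Q}_p$, and $\mathbb{Q}$.
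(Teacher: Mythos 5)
Your proposal is correct, and its skeleton coincides with the paper's: trivial center plus inner automorphisms reduces everything to conjugacy of order-$2$ elements of $\Aut(C)$; Jacobson's result (Proposition \ref{jake}, through Corollaries \ref{conjint2} and \ref{cortconj}) converts that to $k$-isomorphism classes of quaternion subalgebras; the explicit elements $s$, $s\cdot t_{(1,-1)}$, and $s\cdot t_{(-N_p,-pN_p^{-1})}$ (Proposition \ref{scongt}, Lemmas \ref{nonsplitq} and \ref{nonsplitqQp}) witness the split and division classes; and the standard counts of quaternion algebras over $\mathbb{C}$, $\mathbb{F}_q$, $\mathbb{R}$, $\mathbb{Q}_p$, $\mathbb{Q}$ finish. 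The one genuine departure is your surjectivity step: you prove that \emph{every} quaternion $k$-algebra $D$ arises as the fixed subalgebra of some involution, by doubling $D$ with $\alpha=1$, noting that $q_D \perp (-q_D)$ is hyperbolic so that by Theorem \ref{splitcomp} the double is the split octonion algebra $C$, and taking the automorphism that is $+1$ on $D$ and $-1$ on $Da$. The paper proves no such general statement: its proof of Theorem \ref{maintheorem} cites only the explicit lemmas, and the claim that the two listed classes are not exhaustive over $\mathbb{Q}$ is substantiated only afterwards, in the concluding remarks, via the elements $s_p = s\cdot t_{(p,1)}$ fixing $\left(\frac{-1,p}{\mathbb{Q}}\right)$ for primes $p\equiv 3 \bmod 4$. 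Your abstract argument buys a sharper, cleaner statement---isomorphy classes of $k$-involutions are in bijection with \emph{all} quaternion $k$-algebras, hence infinitely many over $\mathbb{Q}$---while the paper's hands-on constructions buy explicit representatives of the form $\theta\circ\mathcal{I}_a$ with $a$ a $k$-inner element of the maximal $(\theta,k)$-split torus, which is the data Helminck's classification scheme (and the rest of the paper) actually uses. Two small points you rightly flag should be carried out: $(s\,t_{(1,-1)})^2=\id$ is immediate since $\mathcal{I}_s$ inverts $T$ and $t_{(1,-1)}^2=\id$, so $s$ and $t_{(1,-1)}$ commute; and over $\mathbb{Q}_p$ with $p>2$ the naive witness indeed fails because $\left(\frac{-1,-1}{\mathbb{Q}_p}\right)$ is split for odd $p$, which is exactly why Lemma \ref{nonsplitqQp} builds its representative from $\mathbb{Q}_p^*/(\mathbb{Q}_p^*)^2$.
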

\begin{proof}

For part (1) we only need to notice that over the fields $\mathbb{R}$, $\mathbb{Q}$ and $\mathbb{Q}_2$ that $\theta$ and $\theta \circ \mathcal{I}_{t_{(1,-1)}}$ leave nonisomorphic subalgebras elementwise fixed and so by Theorem \ref{jake}, Corollary \ref{cortconj}, and Proposition \ref{scongt} they are not isomorphic.  And by Theorem \ref{splitcomp} and Lemma \ref{nonsplitq} these are the only $2$ quaternion subalgebras up to isomorphism.  There are no other possible isomorphy classes.

For part (2) by Theorem \ref{splitcomp}, Lemma \ref{nonsplitqQp}.

For part (3) by Theorem \ref{splitcomp}, Proposition \ref{tconj}, and Corollary \ref{cortconj} we have the result. 
\end{proof}

\subsection{Fixed point groups}

In order to compute the fixed point groups of each $k$-involution we first look at how such elements of $\Aut(C)$ act on $C$.

\newtheorem{fixedlemma}[subsubsection]{Lemma}
\begin{fixedlemma}
\label{fixedlemma}
Let $t\in \Aut(C)=G$ such that $t^2 = \id$ and $D \subset C$ the quaternion algebra elementwise fixed by $t$ then $f \in  G^t := G^{\I_t} = \{g \in G \ | \ \I_t(g)=g \}$ if and only if $f$ leaves $D$ invariant.
\end{fixedlemma}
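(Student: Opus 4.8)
The plan is to unwind the condition $\I_t(f)=f$ into the commuting relation $tf=ft$ (legitimate since $t^{-1}=t$, as $t^2=\id$) and then play this against the eigenspace decomposition $C=D\oplus D^\perp$. As recorded in the discussion following Proposition \ref{jake}, $D$ is precisely the $(+1)$-eigenspace of $t$ (the fixed quaternion subalgebra) and $D^\perp$ is the $(-1)$-eigenspace. For the forward direction I would assume $tf=ft$ and take any $d\in D$. Since $t(d)=d$, I compute $t(f(d))=f(t(d))=f(d)$, so $f(d)$ lies in the $(+1)$-eigenspace $D$; hence $f(D)\subseteq D$, and because $f$ is injective and $D$ is finite dimensional this forces $f(D)=D$. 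This direction uses only that $D$ is exactly the fixed set of $t$.

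The converse is the substantive half. Assuming $f(D)=D$, I first need $f(D^\perp)=D^\perp$. Here I would invoke the fact from \cite{SV00} that every automorphism of a composition algebra preserves the norm $N$, and hence the bilinear form $\langle\ ,\ \rangle$; this follows because each $x$ satisfies $x^2-\langle x,e\rangle x+N(x)e=0$ and $f$ fixes $e$, so $f$ preserves trace and norm. Given $f(D)=D$ together with this orthogonality, for $w\in D^\perp$ and arbitrary $d\in D$ write $d=f(d')$ with $d'\in D$; then $\langle f(w),d\rangle=\langle f(w),f(d')\rangle=\langle w,d'\rangle=0$, so $f(w)\in D^\perp$, and again injectivity with finite dimension gives $f(D^\perp)=D^\perp$. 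With both summands preserved I decompose an arbitrary $x=d+w$ with $d\in D$, $w\in D^\perp$, and check directly that $t(f(x))=f(d)-f(w)=f(d-w)=f(t(x))$, using $f(d)\in D$ and $f(w)\in D^\perp$. Thus $tf=ft$, i.e. $\I_t(f)=f$.

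The main obstacle --- really the only nonroutine point --- is establishing $f(D^\perp)=D^\perp$ in the converse, since $f$ being an algebra automorphism says nothing a priori about the orthogonal complement. The resolution is to pass through the norm-preserving (orthogonal) property of composition-algebra automorphisms, after which the eigenspace bookkeeping is immediate.
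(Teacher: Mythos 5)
Your proof is correct and follows essentially the same route as the paper's: identify $D$ and $D^{\perp}$ as the $+1$- and $-1$-eigenspaces of $t$, and translate the condition $\I_t(f)=f$ (equivalently $tf=ft$, since $t^{-1}=t$) into preservation of those eigenspaces. The only difference is that you explicitly justify the implication $f(D)=D \Rightarrow f(D^{\perp})=D^{\perp}$ via the fact that automorphisms of a composition algebra preserve the norm and bilinear form, a step the paper's proof asserts without comment; this is a welcome bit of added rigor, not a different approach.
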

\begin{proof}
($\Rightarrow$):  Let $D \subset C$ be fixed elementwise by $t$ and $f\in G^t$, then $\I_t(f)=f$.  Now let $c\in C$ be any element of the octonion algebra containing $D$, then we can write $C = D \oplus D^{\perp}$ and $c=a+b$ where $a \in D$ and $b \in D^{\perp}$.  Since $t$ is a $k$-involution it has only $\pm 1$ as eigenvalues and $t(a+b) = a - b$.  Furthermore, 
	\begin{align*}
	 \I_t(f)(c) &= tft^{-1}(c) \\
		&=tft(c) \\
		&=tft(a+b) \\
		&=tft(a)+tft(b) \\
		&=tf(a)+f(b),
	\end{align*}
and since $\I_t(f)(c) = f(c)=f(a+b)$ for all $c\in C$,
	\[ f(a)+f(b) = tf(a) + f(b). \]
From this we can conclude that $tf(a) = f(a)$ so $f(a) \in D$. \

($\Leftarrow$):  If we assume, conversely, that $D \subset C$ is the subalgebra fixed elementwise by $t$ and $f(D)=D$ then $f(D^{\perp}) = D^{\perp}$, and
	\begin{align*}
	tft^{-1}(c) &= tft(a+b) \\
		&= tft(a)+tft(b) \\
		&= tf(a) - tf(b) \\
		&= f(a) + f(b) \\
		&= f(c),
	\end{align*}
for all $c\in C$ and we have the result.
\end{proof}

Every involution in $\Aut(C)$ leaves some quaternion algebra, $D$, elementwise fixed.  Now we need only to see what automorphisms of $C$ leave $D$ invariant.  In every case we will consider if we have a fixed quaternion algebra $D \subset C$, then $C= D\oplus D^{\perp}$ with respect to $N$.  The automorphisms of $C$ that leave $D$ invariant, denoted $\Aut(C,D)$ are of the form,
	\[ s(x+ya) = s(x) +s(y)s(a), \]
where $s \in \Aut(C,D)$; $x,y \in D$ and $a\in D^{\perp}$ such that $N(a) \neq 0$.  Since $s$ leaves $D$ invariant we can further see that
	\[ s(y) \in D, \text{ and } s(a) \in D^{\perp} \text{ such that } N(s(a)) \neq 0, \]
and we can write
	\[ s(x+ya) = s_{dp}(x+ya) = dxd^{-1} + (pdyd^{-1}) a, \]
where $d, p \in D$ and $N(d)\neq 0$, $N(p)=1$.  For more details see \cite{SV00}. 

First let us consider the case where $D$ is a split quaternion algebra.  In this case $D \cong M_2(k)$ and $d\in \GL _2(k)$ and $p \in \SL_2(k)$.  

\newtheorem{fixsplit}[subsubsection]{Proposition}
\begin{fixsplit}
When $t \in \Aut(C)$ is an involution and leaves $D \subset C$, a split quaternion subalgebra, elementwise fixed, then 
	\[ G^t := G^{\I_t} \cong  \PGL_2(k) \times \SL_2(k). \]
\end{fixsplit}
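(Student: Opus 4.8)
The plan is to reduce the computation of $G^t$ to a concrete group of automorphisms of $C$ and then read off its structure from the explicit normal form already recalled before the statement. Since $t$ is an involution fixing the split quaternion subalgebra $D$ elementwise, Lemma \ref{fixedlemma} identifies $G^t = G^{\I_t}$ with the subgroup $\Aut(C,D)$ of automorphisms of $C$ leaving $D$ invariant, so the whole problem becomes: determine $\Aut(C,D)$ when $D \cong M_2(k)$ is split. For this I would use the description recorded just above the statement, namely that every $s \in \Aut(C,D)$ has the form $s_{d,p}(x+ya) = dxd^{-1} + (pdyd^{-1})a$ with $x,y \in D$, $a\in D^{\perp}$, and $d,p\in D$ subject to $N(d)\neq 0$, $N(p)=1$ \cite{SV00}. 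In the split case $N=\det$, so $d$ ranges over $\GL_2(k)$ and $p$ over $\SL_2(k)$.

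Next I would set up the comparison map $\Psi : \GL_2(k)\times \SL_2(k) \to \Aut(C,D)$, $\Psi(d,p)=s_{d,p}$, whose surjectivity is precisely the structure statement above. To locate its fibres I would solve $s_{d,p}=s_{d',p'}$: comparing the actions on $D$ forces $dxd^{-1}=d'x(d')^{-1}$ for all $x$, so $d$ and $d'$ agree modulo the centre $k^{*}\id$ of $\GL_2(k)$; comparing the induced maps on $D^{\perp}$ then forces $p=p'$. Hence the first coordinate only matters modulo scalars, $\Psi$ descends to a bijection $\PGL_2(k)\times \SL_2(k)\to \Aut(C,D)=G^t$, and its kernel is the central $k^{*}$ sitting in the first factor. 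In particular, \emph{as a set}, $G^t$ is already visibly $\PGL_2(k)\times \SL_2(k)$.

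The remaining, and decisive, step is to verify that this bijection is a group homomorphism realising the \emph{direct} product. Here I would compute the composition law directly: applying $s_{d_2,p_2}\circ s_{d_1,p_1}$ to $x+ya$ yields $(d_2d_1)x(d_2d_1)^{-1} + \big(p_2\,(d_2 p_1 d_2^{-1})\,(d_2 d_1)\,y\,(d_2d_1)^{-1}\big)a$, so the $\GL_2$-coordinate multiplies as $d_2d_1$ (hence the $\PGL_2(k)$-factor behaves as expected) while the $\SL_2$-coordinate comes out as $p_2\,(d_2 p_1 d_2^{-1})$. The main obstacle is exactly the twist $d_2 p_1 d_2^{-1}$ in this last expression: a priori it exhibits $\GL_2(k)$ acting on the $\SL_2(k)$-factor by conjugation, i.e.\ a \emph{semidirect} rather than a direct product. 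I would therefore expect the heart of the proof to be showing that, after the identifications above, the $\PGL_2(k)$-factor (conjugation on $D$) and the $\SL_2(k)$-factor (the pointwise stabiliser of $D$, acting only on $D^{\perp}$) commute inside $\Aut(C,D)$, equivalently that this conjugation twist is absorbed by the chosen coordinates. This is the one place I would scrutinise most carefully, since it is the point on which the direct-product assertion turns; everything else is a formal consequence of Lemma \ref{fixedlemma} and the normal form from \cite{SV00}.
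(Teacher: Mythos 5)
Your reduction via Lemma \ref{fixedlemma}, the normal form $s_{d,p}$ from \cite{SV00}, and your composition law
\[
s_{d_2,p_2}\circ s_{d_1,p_1}\;=\;s_{d_2d_1,\;p_2\,(d_2p_1d_2^{-1})}
\]
are all correct, and you put your finger on exactly the right spot --- but the step you postpone to the end (showing the twist $d_2p_1d_2^{-1}$ is absorbed, i.e.\ that the $\PGL_2(k)$-part and the $\SL_2(k)$-part commute inside $\Aut(C,D)$) is not a gap that can be closed: it is false. Your own formula gives $s_{d,e}\,s_{e,p}\,s_{d,e}^{-1}=s_{e,\,dpd^{-1}}$, so the subgroup $\{s_{d,e}\}\cong\PGL_2(k)$ acts on the normal subgroup $\{s_{e,p}\}\cong\SL_2(k)$ (the pointwise stabilizer of $D$) through the full conjugation action; these two subgroups do not commute elementwise, and $G^t=\Aut(C,D)$ is the resulting semidirect product of $\SL_2(k)$ by $\PGL_2(k)$, not a direct product.

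Moreover, no cleverer choice of coordinates can rescue the claim, because the stated isomorphism is itself false. Take $k=\mathbb{C}$, where every determinant is a square: writing $d=\alpha u$ with $\alpha\in\mathbb{C}^*$, $u\in\SL_2(\mathbb{C})$, and setting $v=pu$, every element of $G^t$ is $s_{u,vu^{-1}}$, and your composition law now closes up without twist, $s_{u_2,v_2u_2^{-1}}\circ s_{u_1,v_1u_1^{-1}}=s_{u_2u_1,\,(v_2v_1)(u_2u_1)^{-1}}$; hence $(u,v)\mapsto s_{u,vu^{-1}}$ is a surjective homomorphism $\SL_2(\mathbb{C})\times\SL_2(\mathbb{C})\to G^t$ with kernel $\{\pm(e,e)\}$, so $G^t\cong\left(\SL_2(\mathbb{C})\times\SL_2(\mathbb{C})\right)/\{\pm(e,e)\}$. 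This group is not isomorphic to $\PGL_2(\mathbb{C})\times\SL_2(\mathbb{C})$: its only proper nontrivial normal subgroups are its order-two centre and the two evident copies of $\SL_2(\mathbb{C})$ (a normal subgroup of a product of perfect central extensions of simple groups is either central or contains a factor), so it has no normal subgroup isomorphic to the simple group $\PGL_2(\mathbb{C})$, whereas $\PGL_2(\mathbb{C})\times\SL_2(\mathbb{C})$ visibly does. This is the split analogue of the classical fact, computed in the paper's own reference \cite{Yo90}, that the involution centralizer in compact $G_2$ is $\SO(4)\cong(\Sp(1)\times\Sp(1))/\{\pm(1,1)\}$ rather than $\SO(3)\times\Sp(1)$. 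You should also know that the paper's proof stumbles at precisely the point you flagged: it treats $(d,p)\mapsto s_{dp}$ as a homomorphism from the \emph{direct} product $\GL_2(k)\times\SL_2(k)$ and computes its ``kernel'' $\{(\alpha e,e)\}$, which your computation shows it is not. The correct conclusion is that $G^t$ is the semidirect product above --- equivalently, the group of $k$-points of $(\SL_2\times\SL_2)/\{\pm(e,e)\}$ --- so your instinct to scrutinize the direct-product assertion was exactly right; the proposal fails only because what it set out to prove cannot be proved.
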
	
	\begin{proof}
	If we consider the map 
		\[ \psi: \GL _2(k) \times \SL_2(k) \to \Aut(C,D) \text{, where } (d,p) \mapsto s_{dp}, \]
	is surjective.  The kernel is given by $\ker(\psi) = \{ (\alpha \cdot e , e ) \ | \ \alpha \in k^*\}$.
	\end{proof}

In the case where the involution $t$ leaves a quaternion division algebra $D$ invariant, we have the same initial set up, i.e., $s\in \Aut(C,D)$ then
	\[ s(x+ya) = s_{dp}(x+ya) = d x d^{-1} + (pdyd^{-1}) a, \]
where $N(d) \neq 0$ and $N(p)=1$, only $D \not\cong M_2(k)$ it is isomorphic to Hamilton's quaternions over $k$.  In this case $N(d) \neq 0$ only tells us that $d\neq 0 \in D$.  

\newtheorem{fixdiv}[subsubsection]{Proposition}
\begin{fixdiv}
When $t \in \Aut(C)$ is an involution and leaves $D \subset C$, a quaternion division algebra, elementwise fixed, then $G^t \cong \SO(D_0,N) \times \Sp(1)$ where $D_0 = ke^{\perp}$.
\end{fixdiv}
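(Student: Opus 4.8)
The plan is to follow the strategy of Proposition \ref{fixsplit}, adapting it to the division case. First I would invoke Lemma \ref{fixedlemma} to reduce the problem: since $t$ fixes $D$ elementwise, $f \in G^t$ if and only if $f$ leaves $D$ invariant, so $G^t = \Aut(C,D)$. By the normal form recorded just before the statement, every such automorphism is $s_{dp}(x+ya) = dxd^{-1} + (pdyd^{-1})a$ with $d \in D^*$ and $N(p)=1$. Because $D$ is a division algebra, the condition $N(d)\neq 0$ is simply $d \neq 0$, so $d$ ranges over all of $D^* = D\setminus\{0\}$ and $p$ over the norm-one group $\{p\in D : N(p)=1\}$.

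Next I would set up the map $\psi: D^* \times \{p : N(p)=1\} \to \Aut(C,D)$, $(d,p)\mapsto s_{dp}$, exactly as in the split case. Surjectivity is immediate from the normal form. For the kernel, $s_{dp}=\id$ forces $dxd^{-1}=x$ for all $x\in D$, hence $d\in Z(D)^{*}=k^{*}e$, and then the second coordinate reduces to $py=y$ for all $y$, so $p=e$; thus $\ker\psi = \{(\alpha e,e) : \alpha\in k^{*}\}$. To identify the two quotient factors I would appeal to Skolem--Noether: every automorphism of the central simple algebra $D$ is inner, so $d \mapsto (x\mapsto dxd^{-1})$ induces $D^{*}/k^{*}e \cong \Aut(D)$. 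Since conjugation preserves $N$ and fixes $e$, it restricts to a norm-preserving, determinant-one map of the three-dimensional space $D_0=(ke)^{\perp}$, and the classical identification gives $\Aut(D)\cong \SO(D_0,N)$. The second factor $\{p : N(p)=1\}=\SL_1(D)$ is by definition $\Sp(1)$. Passing to the quotient of $D^{*}\times\Sp(1)$ by $\ker\psi$ then produces the factors $\SO(D_0,N)$ and $\Sp(1)$.

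The main obstacle is confirming that $\psi$ realizes the \emph{direct} product rather than a twisted version of it. A direct computation of the composite $s_{dp}\circ s_{d'p'}$ yields first coordinate $dd'$ but second coordinate $p\,(dp'd^{-1})$, so the norm-one factor is acted on by conjugation from $D^{*}$; consequently the two evident subgroups $\{s_{d,e}\}$ and $\{s_{e,p}\}$ do not commute, and the naive group law is a priori semidirect. The delicate point is therefore to check that, after dividing by $k^{*}e$ and applying the Skolem--Noether identification, this conjugation twist is absorbed so that the group genuinely splits as the stated product $\SO(D_0,N)\times\Sp(1)$; this is where I would expect the real work to lie, everything else being the same bookkeeping as in the split case.
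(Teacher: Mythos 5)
Your setup reproduces the paper's own argument (reduction to $\Aut(C,D)$ via Lemma \ref{fixedlemma}, the parametrization $s_{dp}$, surjectivity, the kernel, and the Jacobson/Skolem--Noether identification $\Aut(D)\cong\SO(D_0,N)$), and the obstacle you flag in your last paragraph is exactly the right thing to worry about --- but it cannot be overcome, because the twist is \emph{not} absorbed. Your composition formula $s_{dp}\circ s_{d'p'}=s_{dd',\,p(dp'd^{-1})}$ shows that $(d,p)\mapsto s_{dp}$ is a homomorphism only when the source carries the semidirect structure $\Sp(1)\rtimes D^*$, with $D^*$ acting by conjugation. To see what the group really is, substitute $c=pd$ (so $N(c)=N(p)N(d)=N(d)$): then $s_{dp}(x+ya)=dxd^{-1}+(cyd^{-1})a$, composition becomes coordinatewise in the pair $(d,c)$, and the kernel becomes the diagonal copy of $k^*$. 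Hence
\[ G^t=\Aut(C,D)\cong\left\{(d,c)\in D^*\times D^* \ \big| \ N(d)=N(c)\right\}/\Delta k^*, \]
a twisted form of $\SO_4$. Already over $k=\mathbb{R}$ with $D$ the Hamilton quaternions this is not the stated group: scaling each pair so that $N(d)=N(c)=1$ gives $(\Sp(1)\times\Sp(1))/\{\pm(e,e)\}\cong\SO(4)$ (compact), whereas $\SO(D_0,N)\times\Sp(1)\cong\SO(3)\times\Sp(1)$. These are non-isomorphic: the only proper nontrivial normal subgroups of $\SO(4)$ are its center and its two $\Sp(1)$ factors, so it contains no normal $\SO(3)$. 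This also matches Yokota's result, cited in this very paper, that the fixed point group of $\gamma$ in $G_2$ is $(\Sp(1)\times\Sp(1))/\mathbb{Z}_2$, the maximal compact subgroup $\SO(4)$ of $G_{2(2)}$.

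So your proposal is incomplete at precisely the step you identified, and that step would fail: the ``real work'' you deferred is impossible because the proposition as stated is incorrect. What your computation actually uncovers is a defect in the paper's own proof, which declares $\psi:D^*\times\Sp(1)\to\Aut(C,D)$ to be a surjective homomorphism and divides by its kernel, silently imposing the direct-product group law on the source --- the very point your composite $s_{dp}\circ s_{d'p'}$ refutes. (The companion proposition for the split case suffers the same flaw; there the correct group is $\left\{(d,c)\in\GL_2(k)\times\GL_2(k) \ \big| \ \det d=\det c\right\}/\Delta k^*$, again a form of $\SO_4$, rather than $\PGL_2(k)\times\SL_2(k)$.) Your instinct not to paper over the semidirect twist was correct; the honest conclusion is that the statement should be replaced by the fiber-product quotient displayed above.
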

	\begin{proof}
	Then $N(p) = p \bar{p} = 1$ tells us that $p \in \Sp(1)$ is the group of $1 \times 1$ symplectic matrices over $D$.  If we consider the surjective homomorphism 
		\[\psi: D^* \times \Sp(1) \to \Aut(C,D) \text{ where } (d,p) \mapsto s_{dp},\]
	and $D^*=D-\{0\}$ is the group consisting of the elements of $D$ having inverses, its kernel, $\ker(\psi) = \{ (\alpha \cdot e,e) \ | \  \alpha \in k^* \}$.  So we have 
		\[ D^*/Z(D^*) \times \Sp(1) \cong \Inn(D^*) \times \Sp(1). \]
	Jacobson tells us, \cite{Ja58}, that all automorphisms of $D^*$ are inner and leave the identity fixed.  Also included in \cite{Ja58} is that $\Inn(D^*) \cong \SO(D_0,N)$, where $D_0 \subset D$ such that $D_0 = e^{\perp}$, the three dimensional subspace, and $\SO(D_0,N)$ is the group of rotations of $D_0$ with respect to $N|_{D_0}$.
	\end{proof}

\section{Concluding remarks}

\subsection{$k=\mathbb{Q}$}

Isomorphy classes of quaternion algebras over $k$ are given by equivalence classes of $2$-Pfister forms $\left(\frac{\alpha,\beta}{k}\right)$ over $k$ corresponding to the quadratic form,
	\[ N(x) = x_0^2 + (-\alpha)x_1^2 + (-\beta)x_2^2 + (\alpha\beta)x_3^2, \]
while octonion algebras depend on $3$-Pfister forms, $\left( \frac{\alpha,\beta,\gamma}{k} \right)$ with quadratic form, 
	\[ N(x) = x_0^2 + (-\alpha)x_1^2 +(-\beta)x_2^2 + (\alpha\beta)x_3^2 + (-\gamma)x_4^2 + (\alpha\gamma)x_5^2 + (\beta\gamma)x_6^2 + (-\alpha\beta\gamma)x_7^2. \]

It is not difficult to show that for a prime $p$, such that $p \equiv 3 \mod 4$, 
	\[ \left( \frac{ -1, p }{\mathbb{Q}} \right) \]
is a division algebra.  Further, for $p$ and $q$ distinct primes both equivalent to $3 \mod 4$
	\[  \left( \frac{ -1, p }{\mathbb{Q}} \right) \not\cong  \left( \frac{ -1, q }{\mathbb{Q}} \right), \]
see \cite{Pi82} Exercise 1.7.4.

\newtheorem{rationalex}[subsubsection]{Example}
\begin{rationalex}
Let $C = \left(M_2(\mathbb{Q}),M_2(\mathbb{Q}) \right)$.  We can find an involution of $\Aut(C)$ leaving $D \cong \left( \frac{ -1, p }{\mathbb{Q}} \right)$ elementwise fixed by first constructing a basis for $D$.  If we pick
	\[ a = \left(  0,
		\begin{bmatrix}
		 & 1 \\
		1 & 
		\end{bmatrix}
		\right)
	\text{ and }
		b = \left(
		\begin{bmatrix}
		 & p \\
		1 & 
		\end{bmatrix}, 
		0 \right), \]
then $a^2 = -1$ and $b^2 = p$.  Then the other basis elements of $D$ are $e= (\id,0)$ and
	\[ ab = \left( 0,
		\begin{bmatrix}
		-1 & \\
		 & -p
		\end{bmatrix}
		\right). \]
Let $p \equiv 3 \mod 4$, then $D$ is a division algebra and
	\[
		s_p = \begin{bmatrix}
				& & & 1 \\
				& & p & \\
				& p^{-1} & & \\
				1 & & &
			\end{bmatrix}
			\bigoplus
			\begin{bmatrix}
				& & & p^{-1} \\
				& & 1 & \\
				& 1 & & \\
				p & & &
			\end{bmatrix} \in \Aut(C), \]
leaves $D$ elementwise fixed.  Notice $s_p = s \cdot t_{(p,1)}$.  There is a $\mathbb{Q}$-involution for each $p \equiv 3 \mod 4$ of which there are an infinite number.
\end{rationalex}

Over $\mathbb{Q}$ there are only $2$ octonion algebras, $\left( M_2(\mathbb{Q}), M_2(\mathbb{Q}) \right)$ and a division algebra.  When we consider quaternion algebras over $\mathbb{Q}$ we get one split quaternion algebra, but we get exactly one division algebra for each unique set of an even number of real or finite places of $\mathbb{Q}$, \cite{O'M63} and  \cite{SV00}.  \

\subsection{$k$ is an algebraic number fields}

When the quaternion or octonion algebra is taken over an algebraic number field we have that $2$ quadratic forms are equivalent if and only if they are equivalent over all local fields $k_{\nu}$ where $\nu$ varies over all places of $k$, which is a result of Hasse's Theorem \cite{Se73}.  Hasse's Theorem also tells us the number of possible anisotropic quadratic forms corresponding to nonisomorphic octonion algebras is given an upper bound of $2^r$ where $r$ is the number of real places of $k$ \cite{SV00}. \

\newpage

\bibliographystyle{plain}

\end{document}